\newtheorem{theorem}{Theorem}[section]
\newtheorem{proposition}[theorem]{Proposition}
\newtheorem{conjecture}[theorem]{Conjecture}
\newtheorem{problem}[theorem]{Problem}
\newtheorem{lemma}[theorem]{Lemma}
\theoremstyle{definition}
\def\epsilon{\varepsilon}
\DeclareMathOperator{\ce}{f_{ce}}
\DeclareMathOperator{\Cce}{g_{ce}}
\DeclareMathOperator{\re}{f_{re}}
\title{Tight Bounds for Cycle-Edge Decompositions and Covers}
\author{Saieed Akbari}
\address{Department of Mathematical Sciences, Sharif University of Technology}
\email{s\_akbari@sharif.edu}
\author{Jonny Aloni}
\address{Department of Mathematics, Simon Fraser University}
\email{jonny\_aloni@sfu.ca}
\author{Arash Beikmohammadi}
\address{Department of Computer Science, Simon Fraser University}
\email{arash\_beikmohammadi@sfu.ca}
\author{Alexander Clow}
\address{Department of Mathematics, Simon Fraser University}
\email{alexander\_clow@sfu.ca}
\date{}
\begin{document}
\pagenumbering{arabic}

\begin{abstract}
    An old conjecture of Erd{\H{o}}s and Gallai states that  every $n$ vertex graph can be decomposed,
    that is $E(G)$ can be partitioned,
    into $O(n)$ cycles and edges.
    The covering version of this conjecture was proven by Pyber in 1985, where it was shown that all graphs can be covered by $n-1$ cycles and edges.
    The best upper bound on the number of cycles and edges required to decompose any graph is $O(n\log^*(n))$,
    which was recently shown by
    Buci{\'c} and Montgomery in 2023.
    Here $\log^*(n)$ denotes the iterated logarithm function.
    Meanwhile, a construction of Erd{\H{o}}s
    demonstrate that there exists graphs which require $(\frac{3}{2}-o(1))n$
    cycles and edges to be decomposed.
    We prove all graphs with maximum degree at most $4$ can be decomposed into
    $n-1$ or fewer cycles and edges.
    We also show that every $n$ vertex claw-free graph can be decomposed into $n-1$ or fewer $2$-regular subgraphs and edges.
    Finally, we prove that every graph $G$ containing a cycle can be covered by $n-2$ or fewer cycles and edges.
    This improves Pyber's covering theorem by proving that $n-1$ cycles and edges are required only for trees.
\end{abstract}

\maketitle

\section{Introduction}

Let $G = (V,E)$ be a simple graph.
If $\mathcal{G}$ is a set of graphs, then a \emph{decomposition} of $G$ into graphs from $\mathcal{G}$, is a partition $E_1,\dots, E_k$ of $E$
such that every subgraph $H_i = (V(E_i),E_i)$ is in $ \mathcal{G}$.
Here $V(E_i)$ denotes the subset of $V$ incident to edges in $E_i$.
Similarly, a \emph{cover} of $G$ by graphs from $\mathcal{G}$ is a set of subsets $E_1,\dots, E_k \subseteq E$ such that
the union of $E_1,\dots, E_k$ is $E$, and every subgraph $H_i = (V(E_i),E_i) \in \mathcal{G}$.
Hence, every decomposition of $G$ into graphs from $\mathcal{G}$ is a cover of $G$ by graphs from $\mathcal{G}$, but not visa-versa.
We will often identify a decomposition or cover $E_1,\dots, E_k$ of $G$ with the set of subgraphs $H_1,\dots, H_k$.
See Figure~\ref{fig:decomp and cover example} for an example of a decomposition and a cover.
For more definitions in graph theory we refer the reader to \cite{west2001introduction}.

\begin{figure}[h!]
    \centering
    \includegraphics[scale = 0.75]{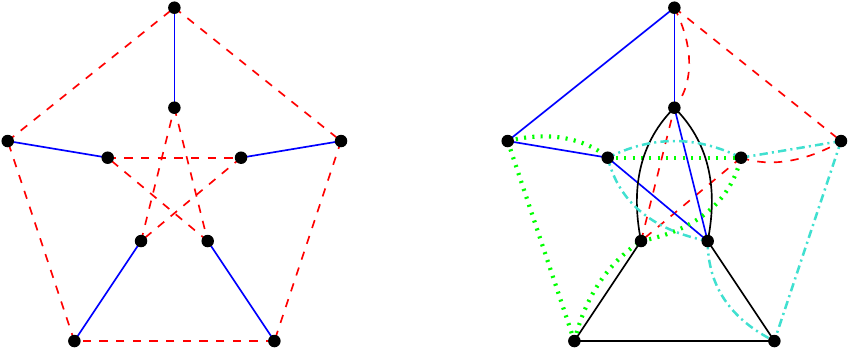}
    \caption{A decomposition of the Petersen graph into cycles and edges is given on the left.
    A cover of the Petersen graph by cycles is given on the right. If an edge is covered multiple times, then this is represented by a multiedge.}
    \label{fig:decomp and cover example}
\end{figure}

These notions of graph decomposition and cover are widely studied, see \cite{kottarathil2024graph}.
For many sets of graphs $\mathcal{G}$, not every graph can be decomposed into graphs from $\mathcal{G}$.
Moreover, the problem of determining if $G$ admits a decomposition into graphs from $\mathcal{G}$ is NP-hard \cite{dor1997graph}.
Our focus will not be determining if particular graphs $G$ have decompositions into graphs from some family $\mathcal{G}$.
Rather we will select our family $\mathcal{G}$, and at times our graphs $G$, such that we know a decomposition (or cover) exists, 
and our goal will be to prove $G$ can be decomposed (or covered) by a small number of graphs from our target family.

Such questions are well established in the literature.
Per \cite{lovasz1968covering}, Erd\H{o}s asked: what is the smallest number of paths required to decompose any $n$ vertex graph?
In response Gallai conjectured the following. If true, this bound is tight for any graph where every vertex has odd degree.

\begin{conjecture}[Gallai's Conjecture \cite{lovasz1968covering}]
    Every $n$ vertex connected graph $G$ can be decomposed into at most $\lceil \frac{n}{2} \rceil$ paths.
\end{conjecture}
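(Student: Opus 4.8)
The plan is to build on Lov\'asz's foundational theorem, which guarantees that every $n$ vertex graph decomposes into at most $\lfloor n/2 \rfloor$ paths \emph{and} cycles; the entire content of Gallai's conjecture is then to trade each cycle for a path without exceeding the budget of $\lceil n/2 \rceil$. Before attempting this I would record the matching lower bound that makes the conjecture tight. If every vertex of a connected graph $G$ has odd degree, then in any path decomposition each vertex must be an endpoint of some path: the paths passing through a vertex $v$ contribute an even number of incident edges, so an odd degree forces at least one path to terminate at $v$. Thus every vertex is a path endpoint, there are at least $n$ endpoints, each path supplies only two, and so at least $\lceil n/2 \rceil$ paths are required. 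This certifies tightness and signals that the even-degree vertices are where the slack lives.

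For the upper bound I would proceed by induction on $|E(G)|$ and try to peel off one path at a time. Concretely, fix a vertex $v$ of odd degree, select a path $P$ ending at $v$ by some greedy or potential-based rule, delete $E(P)$, and apply induction to $G - E(P)$. The accounting one hopes for is that removing $P$ lowers the number of odd-degree vertices enough that the inductive bound $\lceil n'/2 \rceil$ on the residual graph, plus one for $P$, never exceeds $\lceil n/2 \rceil$. Deleting a path between two odd vertices flips the parity of exactly those two endpoints, which is the favorable case; the difficulty is that a greedily chosen path need not end at two odd vertices, and the residual graph can fragment into components whose separate bounds sum to more than the target.

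The real obstruction, and the reason the conjecture is hard, surfaces once $G$ has no odd-degree vertex at all, and more generally in controlling the subgraph $H$ induced by the even-degree vertices of $G$. When $H$ is a forest, one can orient and route paths so that the cycles arising from Lov\'asz's decomposition are always absorbed into an incident path; this is essentially Fan's theorem and already covers a large class of graphs, including (after preprocessing) many sparse and planar cases. The core open difficulty is the general structure of $H$: when $H$ itself contains cycles there is no known way to guarantee that every cycle can be merged with a neighbouring path while keeping the global count at $\lceil n/2 \rceil$, since one such merge can create a new high-degree or parity obstruction elsewhere.

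My honest expectation, therefore, is that the first two steps assemble into a clean inductive framework that reproves Lov\'asz's bound and the tightness example, but that the decisive step---eliminating cycles when the even-degree subgraph $H$ is itself cyclic---is exactly the barrier that has left the full conjecture open. A complete proof would need a \emph{global} charging or discharging scheme on $H$ governing how cycles are distributed among paths, which the purely local peeling argument cannot supply on its own.
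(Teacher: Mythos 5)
You have not produced a proof, and you are right not to: the statement you were given is Gallai's Conjecture, which the paper states only as a conjecture (attributed to \cite{lovasz1968covering}) and explicitly notes remains open. The paper contains no proof of it, so there is nothing to compare your argument against; the honest conclusion of your proposal --- that the inductive peeling framework reproves Lov\'asz's $\lfloor n/2 \rfloor$ bound for paths and cycles but cannot eliminate the cycles in general --- is exactly the state of the art. Your tightness argument is correct and matches the paper's remark that the bound is attained by graphs in which every vertex has odd degree: paths passing through a vertex $v$ use an even number of edges at $v$, so odd degree forces $v$ to be a path endpoint, giving at least $\lceil n/2 \rceil$ paths. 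Your identification of the even-degree subgraph $H$ as the locus of difficulty is also accurate; the known partial results (Lov\'asz, Pyber, Fan, and the maximum-degree-$5$ case of Bonamy and Perrett cited in the paper) all work by imposing structural conditions that tame $H$ or the degrees.

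The only caution is about framing: had you pushed the peeling induction through and claimed a complete proof, the gap would have been precisely where you located it --- deleting a greedily chosen path can fragment the graph, can fail to flip the parity of two odd vertices, and offers no control when $H$ is cyclic, so the inductive accounting $1 + \lceil n'/2 \rceil \leq \lceil n/2 \rceil$ cannot be sustained by local arguments alone. Since you stopped short of that claim, there is no error to flag; your proposal is a correct assessment of an open problem rather than a proof of a theorem.
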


This old conjecture remains open, but some related results and special cases are known.
For example Lov\'asz \cite{lovasz1968covering} proved that every $n$ vertex graph can be decomposed into $\lfloor \frac{n}{2} \rfloor$ cycles and paths.
More recently, Bonamy and Perrett \cite{bonamy2019gallai} proved Gallai's Conjecture for graphs with maximum degree at most $5$.
Gallai's Conjecture has also been verified when $G$ satisfies various structural conditions see \cite{botler2017path,fan2005path,favaron1988path,geng2015gallai,jimenez2017path}.

The primary focus on this paper will be a related conjecture of 
Erd\H{o}s and Gallai.
This conjecture appears in multiple of Erd\H{o}s' problem collections \cite{erdos1971some,erdos1973problems,erdHos1981combinatorial,erdHos1983some}
and has also been highlighted by other authors \cite{bondy1990small,bucic2023towards,conlon2014cycle,girao2021path,glock2016optimal,korandi2015decomposing,pyber1991covering,pyber1996covering}.

\begin{conjecture}[The Erd{\H{o}}s-Gallai Conjecture \cite{erdos1966representation}]\label{Conj: main}
    Every $n$ vertex graph $G$ can be decomposed into $O(n)$ cycles and edges.
\end{conjecture}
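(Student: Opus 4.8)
The plan is to reduce to graphs of large minimum degree and then account for a positive proportion of the edges with each cycle I extract. I should say at the outset that this is the central open case of the theory: by the very bound quoted above, the best result is the $O(n\log^*(n))$ of Buci\'c and Montgomery, so a clean proof of the linear bound is not actually available. What follows is therefore the natural strategy together with the point at which it currently loses the extra $\log^*(n)$ factor.

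First I would strip away the trivial structure. The number of cycles and edges needed is additive over connected components, so I may assume $G$ is connected. Every bridge can be removed as a single edge, and a connected graph has at most $n-1$ bridges, so this costs $O(n)$ pieces and lets me assume $G$ is bridgeless. Each degree-one vertex contributes exactly one edge, and any maximal path whose internal vertices all have degree two can be suppressed into a single edge of an auxiliary multigraph; after paying $O(n)$ edges for these suppressions I am left with a (multi)graph $H$ of minimum degree at least $3$. A decomposition of $H$ into $k$ cycles lifts to a decomposition of $G$ into those $k$ cycles together with the $O(n)$ edges already set aside, so it suffices to decompose $H$ into $O(n)$ cycles.

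The crux is a long-cycle extraction step. Since a cycle has length at most $n$ while $H$ may have $m=\Theta(n^2)$ edges, a decomposition into $O(n)$ cycles forces the cycles to have average length $\Theta(m/n)$; when $H$ is dense this means I must repeatedly find cycles of length $\Theta(n)$. The tool I would reach for is the Koml\'os--Szemer\'edi theory of sublinear expanders: every graph contains a subgraph that expands, and such expanders of minimum degree at least $3$ contain long cycles. I would extract such a cycle, delete its edges, re-establish the minimum-degree condition by again suppressing any newly created low-degree vertices as $O(1)$ edges apiece, and iterate. So long as every extracted cycle has length $\Omega(m/n)$ with $m$ the original edge count, covering all $m$ edges takes only $O(n)$ cycles.

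The hard part is guaranteeing that the cycles stay long through \emph{all} of the roughly $n$ rounds. Deleting a cycle lowers the degrees along it and degrades both the density and the expansion of the remainder, so eventually the surviving graph supports only short cycles; bounding the number of cycles used on this sparse residue, without blowing the total past $O(n)$, is precisely where efficiency is lost. The known arguments handle the residue by restarting the expander machinery at progressively coarser scales, and the number of scales is what produces the $\log^*(n)$ factor rather than an absolute constant. Removing this loss -- equivalently, showing that one can keep extracting cycles each spanning a fixed proportion of the surviving edges across a bounded number of rounds -- is the genuine obstacle and the reason the conjecture is still open. A complete proof along these lines would require either an expander/regularity statement robust under cycle deletion, or an entirely different global decomposition that bypasses iterated long-cycle extraction; the partial results in this paper instead exploit special structure (bounded degree, claw-freeness) to sidestep exactly this difficulty.
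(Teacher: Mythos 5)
There is no proof to compare against: the statement you were given is Conjecture~\ref{Conj: main}, which the paper explicitly presents as open. The paper never attempts a proof of it; its contributions are partial results that sidestep the general case (Theorem~\ref{Thm: Max Degree 4} for maximum degree $4$, Theorem~\ref{Thm: Claw-free} for claw-free graphs with $2$-regular subgraphs, and Theorem~\ref{Thm: Cover-Cycles and Edges} for covers). Your proposal is, by your own admission, not a proof either, and that self-assessment is the correct one. What you have written is an accurate map of the known approach and of the precise point where it fails: the reduction to minimum degree at least $3$, iterated extraction of long cycles via sublinear expanders, and the degradation of density and expansion on the residual graph, whose handling across successively coarser scales is exactly what produces the $O(n\log^*(n))$ bound of Buci\'c and Montgomery cited in the paper. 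So as a survey of the obstruction your text is sound; as a proof it has a gap that is not a fixable oversight but the open problem itself.

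Two smaller technical points in your reduction deserve flagging. First, after suppressing degree-two paths and deleting bridges, the resulting multigraph $H$ need not be even, so it cannot in general be decomposed into cycles alone; odd-degree vertices force leftover edges, and your final claim should read ``decompose $H$ into $O(n)$ cycles \emph{and edges}.'' This is harmless for the asymptotics but matters for the bookkeeping. Second, your accounting requires every extracted cycle to have length $\Omega(m/n)$ where $m$ is the \emph{original} edge count, but after many extractions the surviving graph may simply contain no such cycle (its own edge count has dropped), so the correct invariant must be stated relative to the surviving edges --- and maintaining even that relative guarantee is precisely what no one knows how to do in a bounded number of scales. If you want to contribute within reach of current techniques, the paper's own route is instructive: it trades generality for structure, e.g., using the sequence-of-intersecting-cycles argument in the proof of Theorem~\ref{Thm: Max Degree 4}, where the maximum-degree-$4$ hypothesis forces vertices shared by two cycles to become isolated, an accounting device with no analogue in the dense general case.
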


Constructions, first by Gallai \cite{erdos1966representation} and later by Erd\H{o}s \cite{erdHos1983some}
demonstrate that $(1+o(1))n$ cycles and edges are insufficient for general graphs.
In particular, Erd\H{o}s \cite{erdHos1983some} demonstrates a graph where $(\frac{3}{2}-o(1))n$ cycles and edges are required in a decomposition.

A simple argument, observed by Erd{\H{o}}s and Gallai in the 1960s, consisting of iteratively removing longest cycles 
gives an upper bound of $O(n\log(n))$.
The first improvement on this bound came in 2014 
from Conlon, Fox, and Sudakov \cite{conlon2014cycle}
who showed $O(n\log\log(n))$ cycles and edges are sufficient.
This was recently improved to $O(n\log^*(n))$ in 2023 by Buci{\'c} and Montgomery \cite{bucic2023towards}.
Here $\log^*(n)$ denotes the iterated logarithm of $n$.
The only cases where the Erd{\H{o}}s-Gallai conjecture has been proven 
is typical binomial random graphs \cite{conlon2014cycle,glock2016optimal, korandi2015decomposing},
and graphs with linear minimum degree \cite{conlon2014cycle,girao2021path}.

We are especially interested in graph classes where exactly $n-1$ or fewer cycles and edges are sufficient.
To this end, we define the following parameters.
For a graph $G$,
let $\ce(G)$ denote the minimum number of subgraphs that needed to decompose $G$ into cycles and edges, 
and let $\re(G)$ denote the minimum number of subgraphs that is needed to decompose $G$ into $2$-regular subgraphs and edges.
Similarly, let $g_{ce}(G)$ be the minimum number of subgraphs needed to cover $G$ by cycles and edges.

Given these notations, we can restate Conjecture~\ref{Conj: main} as asking if for all $n$ vertex graphs $G$, $\ce(G) = O(n)$.
In the same vein as  Bonamy and Perrett's result for Gallai's Conjecture on graphs with maximum degree $5$,
we prove that all graphs with maximum degree at most $4$ can be decomposed with $n-1$ cycles and edges.

\begin{theorem}\label{Thm: Max Degree 4}
    If $G$ is an $n$ vertex graph with maximum degree at most $4$, then 
    $\ce(G)\leq n-1$.
\end{theorem}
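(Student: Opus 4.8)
The plan is to induct on the number of vertices and reduce to a clean structural case. Since a decomposition of $G$ is obtained by juxtaposing decompositions of its components, it suffices to treat connected $G$: summing the bound over the components of an arbitrary graph gives $\sum_i (n_i-1)=n-c\le n-1$, where $c$ is the number of components. Next I would strip the graph down to minimum degree $3$. A degree-$1$ vertex $v$ forces its incident edge to be a single edge, so $\ce(G)=\ce(G-v)+1\le (n-2)+1=n-1$ by induction. A degree-$2$ vertex $v$ with neighbours $a,b$ is suppressed: given a near-optimal decomposition of the smaller graph, reroute the part through $ab$ so that it passes through $v$, or, when $ab\in E(G)$, either reroute a cycle using $ab$ through $v$ or promote a single edge $ab$ to the triangle $avb$; in each case the part count changes by at most the amount the induction allows, giving $\ce(G)\le n-1$. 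The remaining base case, where every vertex has degree $2$, is a disjoint union of cycles and needs only $\le n/3$ parts. Hence I may assume every vertex of the connected graph $G$ has degree $3$ or $4$.

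Write $n_3,n_4$ for the number of vertices of degree $3$ and $4$, so $n=n_3+n_4$, $m=\tfrac12(3n_3+4n_4)$, and note $n_3$ is even. The degree-$3$ vertices are exactly the odd vertices, the sole obstruction to $G$ being Eulerian. The strategy is to choose a set $J\subseteq E(G)$ of single edges whose removal leaves an even graph, decompose $G-J$ into cycles, and count. Since every cycle has at least three edges, any cycle decomposition of the even graph $G-J$ uses at most $\tfrac13 e(G-J)=\tfrac13(m-|J|)$ cycles. In the best case $J$ is a perfect matching of the degree-$3$ vertices using edges internal to them, so $|J|=n_3/2$ and
\[
\ce(G)\ \le\ \frac{n_3}{2}+\frac13\Big(m-\frac{n_3}{2}\Big)\ =\ \frac{5n_3+4n_4}{6},
\]
which is at most $n-1=n_3+n_4-1$ exactly when $n_3+2n_4\ge 6$; the only connected minimum-degree-$3$ graph violating this is $K_4$, for which $\ce(K_4)=3=n-1$ is checked by hand. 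More generally, if $J$ is a $T$-join for $T=\{\text{degree-}3\text{ vertices}\}$ (edge-disjoint paths pairing the degree-$3$ vertices, passing only through degree-$4$ vertices, so $G-J$ is again even), then $\ce(G)\le |J|+\tfrac13(m-|J|)$, which is at most $n-1$ provided $|J|\le \tfrac{3n_3+2n_4}{4}-\tfrac32$.

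The heart of the argument, and the step I expect to be hardest, is guaranteeing such a small $J$. Equivalently, writing $J$ as the complement of a spanning even subgraph $H$, I must produce an even subgraph with $e(H)\ge \tfrac32(m-n+1)$, i.e. a minimum $T$-join of size at most $\tfrac{3n_3+2n_4}{4}-\tfrac32$, using only that $\Delta(G)\le 4$ and $\delta(G)\ge 3$. When $G$ is $4$-regular this is immediate from Petersen's $2$-factor theorem: $G$ splits into two $2$-factors, each a union of at most $n/3$ cycles, so $\ce(G)\le 2n/3$ with $J=\emptyset$. When degree-$3$ vertices are present, the clean case is when they admit a saturating matching; for a bridgeless cubic graph this is exactly Petersen's theorem, and the computation above then goes through with room to spare.

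The obstruction to a saturating matching is, by the Tutte--Berge formula, a vertex set $U$ whose deletion leaves many components with an odd number of degree-$3$ vertices, and this happens only around bridges (as in cubic graphs with three or more bridges and no perfect matching). To handle these I would peel off bridges first -- each bridge must be a single edge in any decomposition anyway -- reducing to $2$-edge-connected blocks where Petersen-type matchings are available, and re-run the degree-$2$ suppression on the few bridge endpoints whose degree drops. Crucially, the count above leaves a \emph{linear} surplus, namely $\tfrac{3n_3+2n_4}{4}-\tfrac32-\tfrac{n_3}{2}=\tfrac{n_3}{4}+\tfrac{n_4}{2}-\tfrac32$ beyond the ideal size $n_3/2$, so a bounded matching deficiency can be repaired by enlarging $J$ to a $T$-join whose few extra edges are comfortably absorbed. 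Making this accounting uniform -- bounding the minimum $T$-join against the degree sequence across all of these cases -- is the main technical obstacle, and is exactly where the hypothesis $\Delta(G)\le 4$ does its work, keeping $m\le 2n$ and hence the cycle part cheap.
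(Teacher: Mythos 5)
Your opening reductions are sound: treating components separately, deleting degree-$1$ vertices, suppressing degree-$2$ vertices, splitting at bridges (a bridge is a single-edge part and the two sides are handled by induction on $n$), and the $4$-regular case via Petersen's $2$-factor theorem all work, and your arithmetic correctly reduces the theorem to the following claim: every $2$-edge-connected graph with all degrees in $\{3,4\}$ has a $T$-join $J$ of the odd-degree vertices with $|J|\le\frac{3n_3+2n_4}{4}-\frac{3}{2}$, equivalently a spanning even subgraph with at least $\frac{3}{2}(m-n+1)$ edges. The gap is that this claim---which you yourself flag as ``the main technical obstacle''---is never proved, and in your framework it is not an accounting detail: it \emph{is} the theorem. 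Worse, the tools you propose for it do not deliver. Petersen's matching theorem covers only bridgeless \emph{cubic} graphs, and its natural extension to mixed degrees is false: take three central vertices $u_1,u_2,u_3$ and four pairwise disjoint triangles, and join the $i$-th vertex of each triangle to $u_i$. This graph is $2$-edge-connected with all degrees in $\{3,4\}$, yet no matching saturates its twelve degree-$3$ vertices (each triangle can internally match at most two of its vertices, so at least four vertices must be matched into the center, but only three centers exist). So ``Petersen-type matchings are available in $2$-edge-connected blocks'' is incorrect as stated. Your fallback---absorbing a ``bounded matching deficiency'' into the linear surplus---starts from a false premise, since the deficiency of the set of degree-$3$ vertices is not bounded (it can grow linearly in $n$, e.g.\ in tree-of-gadgets constructions), and it never bounds the lengths of the repair paths in the enlarged $T$-join, which is exactly where the difficulty sits. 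The inequality you need is plausibly true---it is close in spirit to known results on large $2$-regular subgraphs of subcubic graphs---but proving it is a genuine theorem that the proposal does not contain.

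For contrast, the paper's proof avoids all matching/$T$-join machinery. It takes a minimal counterexample and a longest sequence $C_1,\dots,C_k$ of edge-disjoint cycles in which each $C_i$ meets the union of the previous ones, chosen so that $|V(C_k)|$ is maximum; the hypothesis $\Delta\le 4$ is used only to note that any vertex lying on two of these cycles becomes isolated once they are removed, and a short exchange argument (rerouting $C_k$ through the leftover graph, or extending the cycle sequence) yields the induction. If you wish to salvage your route, the honest statement of what remains is: prove that every $2$-edge-connected graph with degrees in $\{3,4\}$ has an even subgraph with at least $\frac{3}{2}(m-n+1)$ edges; until that is done, the proof is incomplete.
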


Next we consider the relaxed problem of decomposing into edges and graph families containing cycles.
We say a graph is \emph{even} if every vertex has even degree.
Then a graph is Eulerian if and only if it is connected and even.
We prove an analogue of Conjecture~\ref{Conj: main} for decompositions into even graphs and edges.

\begin{theorem}\label{Thm: Eulerian decomp}
    Every $n$ vertex graph containing a cycle can be decomposed into an even graph and at most $n-2$ edges.
\end{theorem}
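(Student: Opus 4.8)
The plan is to reformulate the theorem as a statement about deleting few edges to make $G$ even. A decomposition of $G$ into an even graph $H$ together with single edges corresponds exactly to choosing an edge set $F := E(G) \setminus E(H)$ whose deletion leaves every degree even; the edges of $F$ then each form their own part, so it suffices to produce such an $F$ with $|F| \le n-2$. Writing $T$ for the set of odd-degree vertices of $G$, the requirement that $G-F$ be even says precisely that every vertex of $T$ has odd degree in $(V,F)$ while every other vertex has even degree in $(V,F)$; that is, $F$ is a $T$-join. Since each component of $G$ contains an even number of odd-degree vertices, a $T$-join exists, and the task reduces to bounding the size of a smallest one.

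First I would record two easy reductions. If $T = \emptyset$ then $G$ is already even and $F = \emptyset$ works, so assume $T \neq \emptyset$. Next, a smallest $T$-join $F$ is a forest: if $F$ contained a cycle $C$, then $F \triangle C$ would again be a $T$-join, because taking the symmetric difference with an even subgraph preserves every degree parity, and it would have strictly fewer edges, contradicting minimality. A forest on $n$ vertices has at most $n-1$ edges, which already gives $|F| \le n-1$; the entire difficulty is to shave off the last edge using the hypothesis that $G$ contains a cycle.

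The key step handles the extremal case $|F| = n-1$. A forest with $n-1$ edges on $n$ vertices is a spanning tree $\mathcal{T}$, which in particular forces $G$ to be connected. Because $G$ contains a cycle it has strictly more than $n-1$ edges, so there is an edge $e \in E(G) \setminus \mathcal{T}$. Let $C_e$ be the fundamental cycle of $e$ with respect to $\mathcal{T}$; it has exactly one edge outside $\mathcal{T}$, so $|\mathcal{T} \cap C_e| = |C_e|-1$. Since $C_e$ is even, $F' := \mathcal{T} \triangle C_e$ is again a $T$-join, and $|F'| = |\mathcal{T}| + |C_e| - 2|\mathcal{T} \cap C_e| = (n-1) + |C_e| - 2(|C_e|-1) = n+1-|C_e| \le n-2$, using $|C_e| \ge 3$. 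This contradicts the minimality of $F$, so every minimum $T$-join in fact has at most $n-2$ edges. Taking $H = G - F$ as the even graph and letting the remaining $|F| \le n-2$ edges each be a part completes the decomposition.

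I expect the main obstacle to be exactly this passage from the immediate bound $n-1$ to the required $n-2$: the forest bound is automatic, but it is tight precisely for spanning trees, and one must verify that a cycle always supplies a fundamental cycle whose symmetric difference strictly improves the count. A secondary point to check is the disconnected case, where a smallest $T$-join is a non-spanning forest and hence already has at most $n-2$ edges, so the cycle hypothesis is genuinely needed only when $G$ is connected.
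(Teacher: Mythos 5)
Your proof is correct and is essentially the paper's own argument in dual form: your minimum $T$-join $F$ is exactly the complement of the paper's edge-maximum even subgraph $H$, your cycle-reduction step (replacing $F$ by $F \triangle C$) is the paper's observation that $H \cup C$ would beat $H$, and your fundamental-cycle exchange $\mathcal{T} \triangle C_e$ is precisely the paper's swap $(H \cup P) - uv$ in the spanning-tree case. The difference is one of terminology ($T$-joins and symmetric differences versus maximum even subgraphs), not of substance.
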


Furthermore, we are able to show all such graphs can be decomposed into the smallest possible number of $2$-regular graphs.

\begin{theorem}\label{Thm: Even Delta}
    If $G$ is an even graph with maximum degree $\Delta$, then $G$ can be decomposed into $\frac{\Delta}{2}$, $2$-regular graphs.
    Hence, for every even graph $G$ with maximum degree $\Delta$, $\re(G) = \frac{\Delta}{2}$.
\end{theorem}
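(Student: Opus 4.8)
The plan is to prove the two bounds separately. The lower bound $\re(G)\ge \frac{\Delta}{2}$ is immediate: fix a vertex $v$ of degree $\Delta$; in any decomposition into $2$-regular subgraphs and single edges, each part contains at most two of the $\Delta$ edges incident to $v$ (a $2$-regular subgraph meets $v$ in degree $0$ or $2$, and an edge in degree at most $1$), so at least $\Delta/2$ parts are needed. Note $\Delta$ is even because $G$ is even. The entire content is therefore the upper bound: I must exhibit a decomposition of $E(G)$ into exactly $k := \frac{\Delta}{2}$ subgraphs, each a disjoint union of cycles (equivalently, a $2$-regular subgraph on its support). Such a decomposition uses no single edges, so it also gives $\re(G)\le \frac{\Delta}{2}$, and combined with the lower bound this yields the claimed equality.

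To build the decomposition I would first orient $G$. Since $G$ is even, every component is Eulerian, so orienting each component along an Euler circuit produces a digraph $D$ with $d^+(v)=d^-(v)=\deg_G(v)/2 =: d_v$ for every vertex. I then pass to the bipartite graph $B$ with parts $\{v^+ : v \in V\}$ and $\{v^- : v \in V\}$, placing an edge $u^+v^-$ for each arc $u\to v$ of $D$; here $\deg_B(v^+)=\deg_B(v^-)=d_v \le k$. The key step is to pad $B$ to a $k$-regular bipartite multigraph by adding, for each $v$, exactly $k-d_v$ parallel edges joining $v^+$ to $v^-$. A $k$-regular bipartite multigraph decomposes into $k$ perfect matchings $M_1,\dots,M_k$ by König's theorem, and this is where the padding pays off: each $M_i$ matches every $v^+$ to a unique $\sigma_i(v)^-$, so $M_i$ encodes a permutation $\sigma_i$ of $V$. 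The added edges are exactly the fixed points of $\sigma_i$ (they join $v^+$ to $v^-$), while every non-fixed value corresponds to an original arc $v\to\sigma_i(v)$ of $D$.

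Deleting the fixed points, the nontrivial cycles of $\sigma_i$ trace vertex-disjoint directed closed walks $v\to\sigma_i(v)\to\sigma_i^2(v)\to\cdots$ in $D$; these have length at least $3$ (length $1$ is a fixed point, and length $2$ is impossible since $G$ is simple), so they are genuine vertex-disjoint cycles of $G$. Letting $F_i$ be the set of original edges in $M_i$, each $F_i$ is therefore a $2$-regular subgraph of $G$, and since $M_1,\dots,M_k$ partition the edges of $B$ they partition the original edges, namely $E(G)$, among $F_1,\dots,F_k$. A degree check confirms consistency: $v^+$ is matched by an original arc in exactly $d_v$ of the matchings, so $v$ has degree $2$ in exactly $d_v$ of the $F_i$ and degree $0$ in the rest, for a total of $2d_v=\deg_G(v)$. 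I expect the main obstacle to be precisely the gap between \emph{maximum degree two} and \emph{$2$-regular}: a naive König colouring of $B$ alone produces subgraphs that are unions of paths \emph{and} cycles, and the dangling path-ends are exactly the vertices of deficient degree. Padding with the $v^+v^-$ edges is what absorbs this deficiency into harmless fixed points and forces every part to be $2$-regular, so the delicate point to get right is verifying that these added edges never masquerade as real cycles, which is the role of the simplicity and length-at-least-$3$ observations.
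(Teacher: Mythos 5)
Your proof is correct and follows essentially the same route as the paper: orient $G$ along Euler circuits, form the bipartite graph on $\{v^+\}\cup\{v^-\}$, pad with parallel $v^+v^-$ edges to $\frac{\Delta}{2}$-regularity, split into perfect matchings by K\H{o}nig's theorem, and map each matching back to a $2$-regular subgraph, with the same degree-counting lower bound. Your write-up is in fact somewhat more careful than the paper's (handling disconnected even graphs, noting that simplicity of $G$ excludes $2$-cycles of the permutations, and getting the padding multiplicity right, which the paper mistypes as $\frac{\Delta}{2}-\deg v$), but these are refinements of the identical argument.
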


Next, we prove that all claw-free graphs can be decomposed into at most $n-1$, $2$-regular graphs and edges.
Notice this bound is still tight for all $n$, as demonstrated by paths.
Here if $G$ is $H$-free, then $G$ does not have $H$ as an induced subgraph.
Meanwhile, as is standard, the claw denotes the graph $K_{1,3}$.

\begin{theorem}\label{Thm: Claw-free}
    If $G$ is a claw-free $n$ vertex graph, then $\re(G)\leq n-1$.
\end{theorem}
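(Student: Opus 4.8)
I would build the argument on Theorem~\ref{Thm: Even Delta}, which handles even graphs exactly, and reduce the general claw-free case to it by ``evening out'' the degrees cheaply. First I would reduce to connected $G$: if $G$ has components $G_1,\dots,G_c$ on $n_1,\dots,n_c$ vertices and each satisfies $\re(G_i)\le n_i-1$, then $\re(G)\le\sum_i(n_i-1)=n-c\le n-1$, so we may assume $G$ is connected. If $G$ is even we are immediately done, since Theorem~\ref{Thm: Even Delta} gives $\re(G)=\Delta/2\le (n-1)/2\le n-1$. Otherwise let $T$ be the (even-sized) set of odd-degree vertices. The core idea is to remove a set $F\subseteq E(G)$ with the property that $H:=G-F$ is even, decompose $H$ using Theorem~\ref{Thm: Even Delta}, and treat the edges of $F$ as single-edge parts, giving $\re(G)\le |F|+\Delta(H)/2$. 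The whole theorem then reduces to choosing $F$ so that $|F|+\Delta(H)/2\le n-1$.

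The clean case is when $T$ can be saturated by a matching $M\subseteq E(G)$ (a matching with both endpoints of each edge in $T$). Then $H=G-M$ is even, $|M|=|T|/2$, and $\Delta(H)\le\Delta(G)\le n-1$. Here the count closes by parity: both $|T|$ and $\Delta(H)$ are even, and since they cannot both attain their maxima simultaneously one gets $|T|+\Delta(H)\le 2n-2$, hence $\re(G)\le |T|/2+\Delta(H)/2\le n-1$. This is exactly where I expect claw-freeness to enter: a vertex $v$ has $\alpha(N(v))\le 2$, so no vertex can force many pairwise non-adjacent incident edges to all be single edges. This is precisely the phenomenon that fails for graphs like $K_{3,m}$ (where a single high-degree vertex with $m$ independent neighbours forces $\Omega(m)$ single edges and pushes $\re$ above $n-1$), so the no-claw hypothesis is what rules out the obstruction and should let one pair the odd vertices efficiently, in the spirit of the Sumner--Las~Vergnas perfect-matching theorem for connected claw-free graphs.

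The main obstacle is that $T$ cannot always be matched inside $G$ — the extreme example being a path $P_n$, whose two odd endpoints are at distance $n-1$ and for which every part must be a single edge, giving $\re(P_n)=n-1$ exactly. Thus the argument must interpolate between the ``dense'' regime (where $T$ is matchable and the even-graph machinery applies) and the ``tree-like'' regime (where one is forced to spend single edges but the forest bound $|E|\le n-1$ keeps the total in check). I would handle the tree-like part by induction on $n$, peeling off a leaf or a pendant path and removing its vertices: deleting vertices preserves claw-freeness (as it is an induced-subgraph-closed property, unlike edge deletion), each removed leaf costs one single edge while dropping $n$ by one, and reconnection or disconnection of the remainder only improves the budget via the components bound. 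The delicate point — and the step I expect to require the most care — is gluing these two regimes inside a single induction while maintaining the exact $n-1$ budget with no slack, since claw-freeness is not preserved when one deletes the edges of $M$ or of a $T$-join, so the reduction must be arranged so that every recursive call is again applied to an induced (hence claw-free) subgraph.
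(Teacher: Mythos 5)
Your high-level reduction---remove an edge set $F$ so that $H=G-F$ is even, apply Theorem~\ref{Thm: Even Delta} to $H$, and pay one part per edge of $F$---is exactly the skeleton of the paper's proof, and your ``clean case'' (when the odd-degree vertices can be paired off by a matching of edges with both ends odd) does close correctly by your parity count. The genuine gap is that your two regimes do not exhaust the claw-free graphs, and the missing middle case is precisely where all the work lies. Consider two triangles joined by a long induced path: this graph is claw-free, its only two odd-degree vertices are non-adjacent (so no matching inside the odd set exists), and it has no leaf or pendant path to peel. In general the odd vertices must be paired by \emph{paths} (a $T$-join), so $F$ is a union of paths and $|F|$ can be far larger than half the number of odd vertices; your parity count then no longer applies, and nothing in your sketch bounds $|F|+\Delta(H)/2$ by $n-1$.

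This is exactly what the paper's two lemmas supply. It first shows (Lemma~\ref{Lemma: general odd paths}) that an edge-minimum linkage of the $2k$ odd-degree vertices by $k$ paths consists of edge-disjoint paths, and then (Lemma~\ref{Lemma: Claw-free odd paths}) that claw-freeness upgrades this to \emph{vertex}-disjoint paths; vertex-disjointness is what gives $|F|\le n-k$. Even that is not enough: when $k<\frac{\Delta}{2}$ the resulting bound $n-k+\frac{\Delta}{2}$ exceeds $n-1$, so the paper additionally proves that each path in a minimum linkage meets the neighbourhood of a maximum-degree vertex $u$ in at most two vertices (again using claw-freeness together with minimality of the linkage), which improves the count to $|F|\le n-\Delta+k$ and yields $\re(G)\le n$, followed by a separate argument ruling out equality. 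None of this second counting step, nor the vertex-disjointness lemma it relies on, appears in your proposal, and the leaf-peeling induction you suggest cannot substitute for it, because the hard instances (such as the example above) have minimum degree at least $2$. So the proposal, as written, has a real gap in the main case, even though its opening reduction coincides with the paper's strategy.
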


The final set of relaxations of Conjecture~\ref{Conj: main} we consider is to study covers by cycles and edges, rather than decompositions into cycles and edges.
Recall that Pyber \cite{pyber1985erdHos} previous proved that $\Cce(G) \leq n-1$ for all graphs.
Note this upper bound is still best possible due to trees.
We are able to show the only graphs to require $n-1$ cycles or edges in a cover are trees.

\begin{theorem}\label{Thm: Cover-Cycles and Edges}
    If $G$ is an $n$ vertex graph that is not a tree, then $\Cce(G) \leq n-2$.
\end{theorem}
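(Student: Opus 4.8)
The plan is to dispose of the easy structural cases first and then isolate the genuine difficulty in a single lemma about $2$-connected graphs. If $G$ is disconnected, say with components $G_1,\dots,G_c$, $c\ge 2$, and $|V(G_i)|=n_i$, then applying Pyber's bound $\Cce(H)\le |V(H)|-1$ to each component gives $\Cce(G)\le\sum_{i=1}^c (n_i-1)=n-c\le n-2$. So I may assume $G$ is connected, and since $G$ is not a tree it contains a cycle. The second reduction is to the block--cut tree: writing $B_1,\dots,B_k$ for the blocks of $G$ with $|V(B_i)|=n_i$, the identity $\sum_{i=1}^k(n_i-1)=n-1$ holds, so covering every block by Pyber's theorem already recovers the bound $n-1$. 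To save one element it therefore suffices to cover a single block more efficiently. Because $G$ contains a cycle, at least one block $B$ is $2$-connected, and the whole theorem reduces to the following \emph{Crux Lemma}: every $2$-connected graph $B$ on $N\ge 3$ vertices satisfies $\Cce(B)\le N-2$. Granting this, covering such a block by $N-2$ elements and every other block $B_j$ by $n_j-1$ gives $\Cce(G)\le (N-2)+\sum_{j}(n_j-1)=(n-1)-1=n-2$.

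For the Crux Lemma my first tool is an ear decomposition $B=C_0\cup P_1\cup\dots\cup P_\ell$, where $C_0$ is a cycle and each ear $P_i$ is attached to the graph $B_{i-1}$ built so far. I would cover $C_0$ by one cycle and cover each ear $P_i$ by the single cycle $P_i\cup Q_i$, where $Q_i$ is any path joining the endpoints of $P_i$ inside the connected graph $B_{i-1}$; this one cycle covers all edges of $P_i$ simultaneously. Setting $m=|E(B)|$ and using $m=N+\ell$, the cover has $1+\ell=m-N+1$ elements, which already yields $\Cce(B)\le N-2$ whenever $m\le 2N-3$. Before invoking this I would repeatedly suppress degree-$2$ vertices: a degree-$2$ vertex $v$ with nonadjacent neighbours is smoothed into a single edge (preserving $2$-connectivity), and if its neighbours are adjacent the triangle through $v$ is used as one cover cycle. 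Each such step lowers $N$ by one and lifts a cover of the smaller graph back through $v$ at the cost of at most one extra element, so the deficit $N-2$ is preserved, and I may assume $\delta(B)\ge 3$ and that $B$ is not a single cycle.

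The main obstacle is the remaining dense regime, where $\delta(B)\ge 3$ and $m\ge 2N-2$ (otherwise the ear bound already applies): here the count $m-N+1$ is too large, and single edges are wasteful, so one must cover many edges per cycle with substantial overlap. My plan is to exploit a spanning even subgraph, as produced in the proof of Theorem~\ref{Thm: Eulerian decomp}, together with its decomposition into $2$-regular graphs from Theorem~\ref{Thm: Even Delta}, and to cover $B$ by the cycles of these factors; the leverage is that higher minimum degree forces the constituent cycles to be long, so few cycles are needed per factor and the total should stay below $N-2$. Reconciling this density-based count with the ear bound across the intermediate range, roughly $\tfrac32 N\le m\le 2N$ where neither estimate is manifestly sufficient, is where I expect the real work to lie. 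I would attempt to close it either by strengthening the inductive hypothesis of the Crux Lemma to maintain a long cycle that can absorb chords, or by choosing the ear decomposition so that chord-ears are routed onto already-selected long cover cycles rather than covered individually.
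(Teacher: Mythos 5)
Your reductions (disconnected case, block/cut-vertex reduction, and the sparse case of your Crux Lemma via ear decompositions) are all sound, but the proposal has a genuine, self-acknowledged gap: the dense regime $\delta(B)\ge 3$, $m\ge 2N-2$ of the Crux Lemma is never closed, and the tools you propose for it will not close it. Decomposing a spanning even subgraph (Theorem~\ref{Thm: Eulerian decomp}) into $2$-regular graphs (Theorem~\ref{Thm: Even Delta}) and then counting the constituent cycles is a \emph{decomposition}-style count, and the leverage you hope for is illusory: a minimum degree condition on $B$ does not force the components of a $2$-factor of an even subgraph to be long (the even subgraph of a dense $2$-connected graph can fall apart into triangles), so the number of cycles can be as large as $\Delta(H)N/6$, which exceeds $N-2$ already for $\Delta(H)\ge 6$. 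Worse, the forest $F$ left over by Theorem~\ref{Thm: Eulerian decomp} can have up to $N-3$ edges, and covering those edges one at a time exhausts the entire budget before the even part is touched. So both halves of your density plan need a fundamentally new idea, not just a reconciliation of counts.

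The paper closes exactly this case with two ingredients absent from your proposal, both of which exploit that a \emph{cover} may reuse edges. First, Fan's theorem (Theorem~\ref{fan}): every even graph on $n$ vertices can be \emph{covered} by $\lfloor\frac{n-1}{2}\rfloor$ cycles --- covering, not decomposing, which is precisely what defeats the short-cycle obstruction above. Second, a pairing trick for the leftover forest: in a $2$-connected graph any two edges lie on a common cycle, so the at most $n-3$ edges of $F$ (guaranteed by Theorem~\ref{Thm: n-3 even}, whose exceptional graphs are $K_4$ with pendant trees and hence, under $2$-connectivity, just $K_4$, checked directly) can be paired up and covered by $\lceil\frac{n-3}{2}\rceil$ cycles and edges. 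The total is
\[
\left\lfloor \frac{n-1}{2} \right\rfloor + \left\lceil \frac{n-3}{2} \right\rceil = n-2,
\]
with no case split on density and no need for ear decompositions or degree-$2$ suppression (the latter also has an unaddressed issue: deleting or suppressing a vertex of a $2$-connected graph can leave a graph that is not $2$-connected, so your induction would have to run on the full theorem rather than on the Crux Lemma). The missing idea, in one sentence: once you are $2$-connected, stop trying to decompose efficiently and instead let every cover element do double duty --- Fan's theorem for the even part, one cycle per \emph{pair} of leftover edges for the rest.
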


The rest of the paper is structured as follows.
In Section~\ref{Sec: Normal Problem} we will prove Theorem~\ref{Thm: Max Degree 4} which deals with decompositions into cycles and edges.
Next, Section~\ref{Sec: Eulerian and edges} deals with decompositions into even graphs and edges.
The next section, Section~\ref{Sec: 2-reg and edges}, covers the theorems related to decompositions into $2$-regular graphs and edges.
Our results on covers are given in Section~\ref{Sec: covers}.
We conclude with a discussion of future work.

\section{Cycle \& Edge Decompositions}
\label{Sec: Normal Problem}

We proceed directly to a proof of Theorem~\ref{Thm: Max Degree 4}.

\begin{proof}[Proof of Theorem~\ref{Thm: Max Degree 4}]
When $n$ is at most $3$ the result is trivial. 
Suppose then that $n$ is the least integer such that there exists an $n$ vertex counterexample to the theorem.
Fix an $n$ vertex graph $G$ with maximum degree at most $4$.

Let $C_1, \ldots, C_k$ be a longest sequence of edge-disjoint cycles in $G$ such that for any $i \in \{ 2,\ldots, k \}$
\begin{align*}
    V(C_i) \cap \bigcup_{j=1}^{i-1} V(C_j) \neq \emptyset.
\end{align*}
Among all such sequences of length $k$, choose one for which $|V(C_k)|$ is maximized.
Define a subgraph $H$ of $G$ with vertex set $V(H)=V(G)$ and edge set
$$E(H)=E(G) \setminus \bigcup_{i=1}^{k} E(C_i).$$
Suppose that for some $i \in \{2,\ldots,k\}$,
$$|V(C_i) \cap \bigcup_{j=1}^{i-1} V(C_j)| > 1.$$
Since $G$ has maximum degree $4$, such an $H$ has at least $k$ isolated vertices.
This is because every vertex that intersects two cycles in $C_i,C_j$ 
has $4$ incident edges in $\cup_{i=1}^k E(C_i)$.
Since $G$ has max degree $4$ each vertex is incident to at most $4$ edges,
implying each vertex incident intersecting two cycles $C_i,C_j$ will be isolated in $H$.

By the minimality of $n$, we can assume no component of $H$ with strictly less than $n$ vertices is a counterexample.
Hence, the edges of $H$ can be decomposed into at most $n-k-1$ cycles and edges. 
Including the $k$ cycles $C_1, \ldots, C_k$, we obtain a decomposition of $E(G)$ into at most
$k + (n - k - 1) = n - 1$
cycles and edges.

So we may assume that for all $i \in \{2,\ldots,k\}$,
$$|V(C_i) \cap \bigcup_{j=1}^{i-1} V(C_j)| = 1.$$

Let $H'$ be the subgraph of $H$ obtained by removing all isolated vertices. 
If $H'$ is disconnected, then each connected component can be handled separately. 
By the minimality of $n$, no connected component of $H'$ is a counterexample.
Since $H'$ has at least 2 components it can be decomposed into at most $|V(H')|-2 \leq n - k - 1$ cycles and edges. 
Including the $k$ original cycles, we again obtain a decomposition into at most $n - 1$ cycles and edges.

Otherwise, $H'$ is connected. 
Let $uv$ be an edge in $E(C_k)$ such that neither $u$ nor $v$ belongs to $\bigcup_{i=1}^{k-1} V(C_i)$.
Such vertices $u$ and $v$ exist since $C_k$ shares exactly $1$ vertex with the other cycles $C_1,\dots, C_{k-1}$, and $|V(C_k)|\geq 3$. 
If $u$ or $v$ are not in $H'$, then $H$ has at least $k$ isolated vertices, and as before, this leads to $\ce(G)\leq n-1$.
Suppose then that 
both $u$ and $v$ are vertices in $H'$.
Let
$$P_1: u = x_0, x_1, \ldots, x_a = v$$
be the path from $u$ to $v$ in $C_k$ that does not use the edge $uv$, and let
$$P_2: u = y_0, y_1, \ldots, y_b = v$$
be a path in $H'$ from $u$ to $v$, which exists since $H'$ is connected.

Since $P_2$ is a path in $H'$, the union of $P_1$ and $P_2$ forms a circuit in $G$. 
Let $W$ be this circuit.
If $W$ is a cycle, then 
$W$ is a longer cycle than $C_k$, since $V(P_1) = V(C_k)$, while $|V(P_2)|>2$, given $uv$ is not an edge in $H'$.
This contradicts the maximality of $|V(C_k)|$ we assumed at the beginning of the proof.

Otherwise, $W$ is not a cycle.
In this case $D = (V(P_1)\cup V(P_2),E(W))$ forms an Eulerian subgraph of $G$ with at least $2$ cycles.
Thus, $D$ can be decomposed into cycles $D_1,\dots, D_q,$ where $q\geq 2$, and
\[
V(D_1) \cap \bigcup_{j=1}^{k-1}V(C_i) \neq \emptyset,
\]
where for all $1 < i \leq q$, $V(D_i)\cap V(D_{i-1})\neq \emptyset$.
In this case, the sequence $C_1,\dots, C_{k-1}, D_1,\dots, D_q$ is a sequence of at least $k+1$ cycles such that every cycle has at least one vertex in common with previous cycles.
Since $q\geq 2$ this contradicts our assumption that $C_1,\dots, C_k$ was a longest sequences of cycles with this property.
This completes the proof.
\end{proof}

A related position that is of some interest is the following.

\begin{proposition}\label{Thm: 2k-reg}
    If $G$ is an $n$ vertex $2k$-regular graph with girth $g$, 
    then $\ce(G) \leq \frac{kn}{g}$.
\end{proposition}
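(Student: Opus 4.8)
The plan is to derive this as a short corollary of Theorem~\ref{Thm: Even Delta} combined with an edge-counting argument that exploits the girth. Since $G$ is $2k$-regular, every vertex has even degree, so $G$ is an even graph with maximum degree $\Delta = 2k$. Applying Theorem~\ref{Thm: Even Delta} therefore gives a decomposition of $G$ into exactly $k$ many $2$-regular subgraphs $R_1, \dots, R_k$.

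Next I would refine each $R_i$ into cycles. Because $R_i$ is $2$-regular, it is a vertex-disjoint union of cycles, and each of these is a genuine cycle of $G$ (as $R_i$ is a subgraph of $G$). Since $G$ has girth $g$, every such cycle has length at least $g$ and hence uses at least $g$ edges. Concatenating the cycle decompositions of $R_1, \dots, R_k$ yields a decomposition of $E(G)$ into cycles alone, with no residual edges required.

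Finally I would count edges. The graph $G$ has $|E(G)| = \tfrac{2kn}{2} = kn$ edges, and these are partitioned among the cycles obtained above. As each cycle consumes at least $g$ edges, the total number of cycles is at most $\tfrac{|E(G)|}{g} = \tfrac{kn}{g}$, which gives $\ce(G) \leq \tfrac{kn}{g}$, as claimed.

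I do not expect a serious obstacle here, since the statement is essentially immediate once Theorem~\ref{Thm: Even Delta} is in hand: the girth simply provides a uniform lower bound on the length of every cycle produced. The only point that warrants a sentence of care is verifying that each cycle arising inside the $2$-regular subgraphs is a cycle of $G$ itself, so that the girth bound legitimately applies; this holds because every $R_i$ is a subgraph of $G$.
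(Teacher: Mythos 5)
Your proof is correct and follows essentially the same route as the paper: the paper applies Petersen's $2$-factor theorem to split $G$ into $k$ two-factors and bounds the number of cycle components in each by $n/g$, while you invoke Theorem~\ref{Thm: Even Delta} (the paper's own generalization of Petersen's theorem) and count edges rather than components --- an equivalent calculation giving the same bound $\frac{kn}{g}$.
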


\begin{proof}
    Let $G$ be a $2k$-regular, $n$ vertex, graph with girth $g$.
    By Petersen's theorem $G$ has a decomposition $D_1,\dots, D_k$ into $2$-factors.
    Let $i$ be fixed but arbitrary.
    Since $G$ has girth $g$ each component of $D_i$ has at least $g$ vertices.
    Hence, there are at most $\frac{n}{g}$ components in $D_i$, each of which is a cycle.
    Thus, $G$ has a decomposition into at most $\frac{kn}{g}$ cycles.
    This completes the proof.
\end{proof}

Notice it is an easy corollary of this argument
that if $G$ is a $6$-regular graph which can be decomposed into three $2$-factors, where at least one of these $2$-factors has at least one components with at least $4$ vertices, then $G$ can be decomposed into at most $n-1$ cycles and edge.
Is it possible every $6$-regular graph admits such a decomposition?

\section{Even \& Edge Decompositions}
\label{Sec: Eulerian and edges}

In this section we consider decompositions into even subgraphs and edges.
We begin by proving Theorem~\ref{Thm: Eulerian decomp}.

\begin{proof}[Proof of Theorem~\ref{Thm: Eulerian decomp}]
    Let $G$ be a graph with a cycle and suppose, for a contradiction, that we cannot decompose $G$ into an even subgraph and $n-2$ edges. 
    Now, consider an edge maximum even subgraph of $G$ called $H$.
    Let $Q$ be the graph $G-H$. We consider the cases where $Q$ contains a cycle and where $Q$ is a forest separately.
    
    Suppose $Q$ is a forest.
    If $Q$ is disconnected, then $Q$ has at most $n-2$ edges and the claim follows immediately.
    Otherwise $Q$ is a tree.
    Because $G$ contains a cycle, $H$ is non-empty and there exists an edge $uv\in E(H)$. 
    Have $P$ be the path from $u$ to $v$ in $Q$. Then $(H\cup P)- uv$ is an even subgraph that is larger then $H$, a contradiction.

    Suppose then that $Q$ contains a cycle.
    Let $C$ be this cycle.
    Then $H$ and $C$ are both edge-disjoint even subgraphs of $G$.
    The union of any edge disjoint even graphs is an even graph,
    so $H\cup C$ is an even subgraph of $G$.
    But this is a contradiction, since $H$ was edge maximum.
    This concludes the proof.
\end{proof}

We note that the result in Theorem~\ref{Thm: Eulerian decomp} is best possible for infinitely many 
graphs.
One such infinite family is given by gluing a tree onto each vertex of $K_4$.
In fact these are the only graphs with a cycle which cannot be decomposed into an even subgraph and $n-3$ or fewer edges.
We provide a proof of this now.

\begin{theorem}\label{Thm: n-3 even}
Let $G = (V,E)$ be a simple connected graph that contains a cycle. Then exactly one of the following are true
\begin{itemize}
\item[i)] $G$ is decomposable by an even subgraph and at most $n-3$ edges.
\item[ii)] $G$ is $K_4$ with a disjoint tree hanging off every vertex of the $K_4$.
\end{itemize}
\end{theorem}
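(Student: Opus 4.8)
The plan is to characterize exactly when a connected graph $G$ containing a cycle fails to be decomposable into an even subgraph plus at most $n-3$ edges. Building on the proof of Theorem~\ref{Thm: Eulerian decomp}, I would again take $H$ to be an edge-maximum even subgraph of $G$ and set $Q = G - H$. From that proof we already know $Q$ is a forest (otherwise $H$ would not be maximum) and moreover $Q$ cannot be disconnected unless we are already done, so the only obstruction to improving the bound from $n-2$ to $n-3$ is the case where $Q$ is a \emph{spanning tree} of $G$ (having exactly $n-1$ edges). The whole theorem therefore reduces to: classify those $G$ for which every edge-maximum even subgraph $H$ leaves behind a spanning tree.

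\medskip

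\noindent The key structural step is to analyze what forces $Q = G - H$ to be a spanning tree even though $H$ is maximum. First I would argue that $H$ must in fact be a single cycle (if $H$ contained two edge-disjoint cycles, or a larger even structure, we could hope to rearrange edges between $H$ and the tree $Q$ to shrink $Q$ below $n-1$ edges). The crucial local move, already used in Theorem~\ref{Thm: Eulerian decomp}, is the following swap: if $uv \in E(H)$ and $P$ is the $u$--$v$ path in the tree $Q$, then $(H \cup P) - uv$ is even; maximality forces $|E(P)| \le |E(H \cap \text{relevant part})|$ in a way that constrains the interaction between $H$ and $Q$ tightly. Pushing this swap argument, I would show that the only way \emph{no} such swap can reduce the number of leftover edges below $n-1$ is if $H$ is a triangle and the three tree-paths connecting its vertices are forced to have length exactly matching the triangle, which combinatorially pins $H \cup (\text{its incident tree edges})$ down to a $K_4$. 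Then the remaining edges of $G$ must form trees hanging off the four vertices of that $K_4$, giving exactly case (ii).

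\medskip

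\noindent For the converse direction I would verify directly that the extremal graph in (ii) genuinely fails (i). Here $n = 4 + (\text{total tree vertices})$, and one checks that every even subgraph of such a $G$ is a subgraph of the $K_4$ (the hanging trees contribute no cycles and their bridges can never lie in an even subgraph). The even subgraphs of $K_4$ are: the empty graph, the four triangles, the three perfect-matching-complement $4$-cycles, and $K_4$ itself; the maximum edge count among even subgraphs of $K_4$ is achieved by a $4$-cycle, with $4$ edges, leaving $|E(G)| - 4 = (n-1) - 4 + \text{(edges of }K_4\text{ not in the }4\text{-cycle)}$ edges. Counting carefully, $|E(G)| = (n-4) + 6$, so removing a maximum even subgraph of size $4$ leaves exactly $n - 2$ edges, confirming $n-3$ is unattainable. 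The mutual exclusivity of (i) and (ii) follows since this count is exact. Finally I would note these two cases are exhaustive by the reduction above, establishing ``exactly one.''

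\medskip

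\noindent The main obstacle I anticipate is the forward rigidity argument: showing that the \emph{only} connected cyclic graph forcing a spanning-tree remainder is the $K_4$-with-trees. The swap move gives a lot of flexibility, so I expect the delicate part to be ruling out larger even subgraphs $H$ and longer ``bad'' configurations — one must argue that any cycle $H$ of length $\ge 4$, or any $H$ with more structure, always admits an edge-swap with $Q$ that strictly decreases the leftover edge count, leaving only the triangle-into-$K_4$ case as irreducible. Handling the interaction between multiple swap choices simultaneously, rather than one at a time, is likely where the real care is needed.
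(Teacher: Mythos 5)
There is a genuine gap in your forward direction: you have misidentified the critical case. A forest on $n$ vertices with $c$ components has exactly $n-c$ edges, so ``$Q$ disconnected'' only finishes the proof when $Q$ has at least \emph{three} components; the case where $Q$ has exactly two components leaves $n-2$ edges and is precisely the hard case --- yet your plan dismisses it. Conversely, the case you propose to analyze, $Q$ a spanning tree, can never occur when $H$ is edge-maximum: for any edge $uv \in E(H)$, taking the $u$--$v$ path $P$ in the tree $Q$ and forming $(H \cup P) - uv$ yields an even subgraph with $|E(H)| + |E(P)| - 1 > |E(H)|$ edges (this is exactly the swap from Theorem~\ref{Thm: Eulerian decomp}), contradicting maximality. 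So your ``rigidity'' analysis targets a vacuous configuration, and it also predicts the wrong extremal structure: in a type ii) graph the edge-maximum even subgraph is a $4$-cycle of the $K_4$ (not a triangle --- a triangle plus the path through the fourth vertex swaps up to a $4$-cycle), and the leftover $G - E(H)$ is exactly two trees, namely one $K_4$-edge plus its two hanging trees on each side.

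The paper's proof works precisely in the two-component case: with $F = G - E(H) = T_1 \cup T_2$, every edge of $H$ must join $T_1$ to $T_2$ (same-component edges admit the swap), so $H$ is bipartite; a double swap along two tree paths shows any matching of size $2$ in $H$ forces its endpoints to be adjacent in the trees, whence a matching of size $3$ would create triangles inside the forest --- impossible; K\H{o}nig's theorem then bounds the vertex cover of $H$ by $2$, and a short case analysis on where the two cover vertices live pins the unique nontrivial component of $H$ down to a $4$-cycle on four vertices inducing a $K_4$. None of this machinery (bipartiteness of $H$ across the two trees, matching/K\H{o}nig argument) appears in your outline, and your swap-based plan cannot substitute for it since the swaps you describe never terminate in the configuration you expect. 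A minor additional slip: $K_4$ is $3$-regular, hence not an even subgraph of itself, so it should not appear in your list of even subgraphs of $K_4$ (your final count of $n-2$ is unaffected, since you correctly use the $4$-cycle as the maximum).
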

\begin{proof}
By Theorem~\ref{Thm: Eulerian decomp} $G$ can be decomposed into an even graph and $n-2$ edges. 
If $G$ is a graph of type ii), then any even graph in such a decomposition will be of size at most 4 and thus an additional $n-2$ edges are needed in the decomposition.
So it remains to show that all graphs that contain a cycle which cannot be decomposed by $n-3$ edges are of type ii).

Let $G$ be an $n$ vertex graph that contains a cycle.
Let $H$ be an edge maximum even graph of $G$.
Then $F:=G - E(H)$ is a forest. 
By Theorem~\ref{Thm: Eulerian decomp} $F$ is not a tree, 
since this would imply $F$ has $n-1$ edges, 
thereby implying that $G$ cannot be decomposed into an even subgraph and $n-2$ edges, a contradiction.
If $F$ has at least $3$ connected components then it can be decomposed by $n-3$ edges. 
Suppose then that $F$ is the union of 2 disjoint trees $T_1$ and $T_2$. 

Note that for every edge $uv\in E(H)$ vertices $u$ and $v$ cannot be in the same component of $F$ denoted as $T_i$.
To see this, notice that if $u$ and $v$ are in $T_i$, then
there is a path $P$ in $ T_i$ that joins $u$ to $v$.
In this case $(H\cup P) - \{uv\}$ is a larger even graph than $H$.

So $H$ is a subgraph of the bipartite graph $(V,E(T_1,T_2))$,
where $E(T_1,T_2)$ is the set of edges with one endpoint in $T_1$ and the other in $T_2$.
Suppose that $H$ contains a matching of size $2$,
$u_1v_1,~u_2v_2\in E(H)$,
where $v_1,v_2\in V(T_1)$ and $u_1,u_2\in V(T_2)$.
Let $P_1$ be the path in $T_1$ that connects $v_1$ to $v_2$ and $P_2$ be the path that connects $u_1$ to $u_2$. 
The following graph is even $H^\prime:= (H\cup P_1\cup P_2) - \{u_1v_1,~u_2v_2\}$.
Then the maximality of $H$ implies $|E(H^\prime)| \leq |E(H)|$.
Hence, $v_1$ and $v_2$ are adjacent to each other in $T_1$, and $u_1$ and $u_2$ are adjacent in $T_2$. 

Suppose $H$ contains a matching of size $3$, call it $u_1v_1,u_2v_2,u_3v_3$.
Since $u_1,u_2,v_1,v_2$ in the last paragraph were chosen without loss of generality, we conclude that the maximality of $H$ implies $v_1,v_2,v_3$ and $u_1,u_2,u_3$ induce triangles in $F$.
But this is a contradiction since $F$ is a forest.
Hence, $H$ has no matching of size $3$.

Since $H$ is bipartite, K\H{o}nig's theorem implies the matching number of $H$, which is at most $2$, 
is equal to the vertex cover number of $H$.
Hence, the vertex cover number of $H$ is at most $2$.
Notice the vertex cover number
of $H$ must be greater than $1$, since $H$ contains a cycle by our assumption that $G$ contains a cycle.
Moreover, no connected component of $H$ is a star since $H$ is 2-regular. 
Therefore, $H$ has vertex cover number $2$, and $H$ has exactly one connected component containing $2$ or more vertices.
Suppose this component is $Q$.

Let $\{a,b\}\subseteq V$ be a vertex cover in $Q$.
We consider the cases 
$a \in V(T_1)$ and $b\in V(T_2)$, versus $a,b\in V(T_1)$ separately.
The labels for vertices and $T_1$ and $T_2$ are chosen without loss of generality.

\medskip
\noindent \underline{Case 1:} $a \in V(T_1)$ and $b\in V(T_2)$.
\medskip

Since the matching number of $Q$ is $2$, $\{b\}$ is not a vertex cover.
Thus there exists a vertex $u\in V(T_2)$ which is not $b$ and $ua\in E(H)$. The degree of $u$ is 1 because $u$ can only be adjacent to elements in the vertex cover $\{a,b\}$ and $u$ is not adjacent to $b$ because they are both in $T_2$.
This contradicts $H$ being even. So this case does not occur.

\medskip
\noindent \underline{Case 2:} $a,b \in V(T_1)$.
\medskip

Let $k = |V(T_2)\cap V(Q)|$.
Since $\{a,b\}$ is a vertex cover, all edges are incident to $a$ or $b$.
As $a,b \in V(T_1)$, the fact that $Q$ is bipartite implies $V(T_1)\cap V(Q) = \{a,b\}$.
Since $Q$ has matching number $2$
both vertices $a$ and $b$ have strictly positive degree.
Since $Q$ is even and bipartite, this implies $k\geq 2$.
If $k>2$ then there exists vertices $u_1,u_2,u_3$ in $T_2$.
Each of these vertices has a positive and even degree in $Q$.
Since $Q$ is bipartite, this implies $a$ and $b$ are both neighbours of each vertex $u_i$.

As with earlier in the proof, this implies $u_1,u_2,u_3$ forms a triangle in $G$.
Since $u_1,u_2,u_3$ are in $V(T_2)$, every edge of this triangle is in $T_2$, contradicting that $T_2$ is a tree.
So $k=2$.

Since $k=2$, $V(T_2)\cap Q = \{u_1,u_2\}$.
This forces $Q$ to have $4$ vertices.
Since $V(Q)$ induces a clique, $V(Q)$ induces $K_4$.
All other edges in $G$ are edges in $T$, a tree.
From here it is trivial to verify $G$ is of type ii).
\end{proof}

\section{$2$-Regular \& Edge Decompositions}
\label{Sec: 2-reg and edges}

In this section, we consider decompositions into $2$-regular graphs and edges.
Note that the $2$-regular subgraphs we discuss need not be spanning (i.e. they are not required to be $2$-factors).
We begin by considering decompositions of even graphs.
Then we consider claw-free graphs.

Recall that Theorem~\ref{Thm: Even Delta} is a generalization of Petersen's 2-factor theorem.
Our proof follows in the same spirit as most modern proofs of Peterson's theorem.

\begin{proof}[Proof of Theorem~\ref{Thm: Even Delta}]
Let $G$ be an Eulerian graph, with Eulerian circuit $W$. Construct a bipartite multi-graph $G_0$ with each part being copies of $V(G)$.
Vertices in opposite part associated to $v\in V(G)$ will be denoted as $v_1,v_2$.
An edge exists between $v_1$ and $u_2$ if, during our Eulerian circuit, we traverse $v$ and then $u$. 
Also add the multi-edge $v_1v_2$ $(\frac{\Delta}{2}-\deg{v})$ times, so the resulting graph is $\frac{\Delta}{2}$-regular.

Let $\mathcal{M} = \{M_1,\dots, M_{\frac{\Delta}{2}}\}$ be a decomposition of $G_0$ into perfect matchings.
Such a decomposition exists, since  $G_0$ is a $\frac{\Delta}{2}$-regular bipartite graph, see \cite{bondy1976graph} Corollary~5.2 page~73.

For each perfect matching $M\in\mathcal{M}$ we will create a $2$-regular subgraph $H$ of $G$ by the rule; 
$vu\in E(H)$ for $v\not=u$ if and only if
$v_1u_2\in M$, and if an edge $v_1v_2 \in M$ then $v\not \in V(H)$. Because all non $v_1v_2$ edges in $G_0$ are contained in some $H$, the set of $H$ graphs decomposes $G$.    
So $\re(G)\leq \frac{\Delta}{2}$.

Trivially, each vertex $v$ sees as at least $\frac{\deg(v)}{2}$ $2$-regular subgraphs and edges in a decomposition into $2$-regular subgraphs and edges.
Thus, $\re(G) \geq \frac{\Delta}{2}$.
\end{proof}

Now we consider claw-free graphs.
We begin with some helpful lemmas regarding the linkage of all vertices with odd degree.
The first of these deals with general graphs.
Using this we derive a stronger statement for claw-free graphs.

\begin{lemma}\label{Lemma: general odd paths}
    Let $G$ be a graph with $2k$ vertices of odd degree and let $P'_1,\dots, P'_k$ 
    be any set of paths that are not pairwise edge-disjoint, 
    such that for every vertex of odd degree $v$, there is a path $P'_i$ where $v$ is an endpoint of $P'_i$.
    Then there exists a set of edge-disjoint paths $P_1,\dots, P_k$
    such that for every vertex of odd degree $v$, 
    there is a path $P_i$ where $v$ is an endpoint of $P_i$,
    satisfying that $\sum_{i=1}^k |E(P_i)| <  \sum_{i=1}^k |E(P'_i)|$.
\end{lemma}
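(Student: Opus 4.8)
The plan is to pass to the \emph{parity subgraph} of the given paths and decompose that. For each edge $e\in E(G)$ let $m_e$ denote the number of paths among $P'_1,\dots,P'_k$ containing $e$, and let $R$ be the subgraph of $G$ consisting of exactly those edges $e$ with $m_e$ odd. The point is that $R$ is automatically a genuine (simple) subgraph of $G$, it already pairs up the odd-degree vertices correctly, and it is cheaper than the original collection precisely when some edge was used more than once.

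First I would pin down the degree parities of $R$. Since there are $2k$ odd-degree vertices, $k$ paths, and hence $2k$ endpoint slots, the hypothesis that every odd-degree vertex is an endpoint of some $P'_i$ forces each $P'_i$ to join two distinct odd-degree vertices, and forces these pairs to form a perfect matching on the odd-degree vertices. Consequently, for each vertex $v$ the multiplicity-weighted degree $\sum_{e\ni v} m_e = \sum_{i} d_{P'_i}(v)$ is odd exactly when $v$ has odd degree in $G$. As reducing each multiplicity modulo $2$ preserves degree parities, $R$ has odd degree at precisely the $2k$ odd-degree vertices of $G$. A graph with exactly $2k$ vertices of odd degree decomposes into $k$ edge-disjoint open trails whose endpoints are exactly these vertices, together with some closed trails which I discard; extracting a path between the two endpoints of each open trail then yields edge-disjoint paths $P_1,\dots,P_k$ meeting every odd-degree vertex as an endpoint.

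It remains to obtain the strict inequality, which is exactly where the non-edge-disjointness hypothesis is used. By construction $\sum_{i} |E(P_i)| \le |E(R)| = |\{e : m_e \text{ odd}\}|$, whereas $\sum_{i} |E(P'_i)| = \sum_{e} m_e$. The gap $\sum_{e} m_e - |E(R)| = \sum_{e} \bigl(m_e - (m_e \bmod 2)\bigr)$ is a sum of nonnegative terms, and since the $P'_i$ are not pairwise edge-disjoint there is some edge with $m_e \ge 2$, which contributes at least $1$; hence $\sum_{i} |E(P_i)| \le |E(R)| < \sum_{i} |E(P'_i)|$. I expect the main obstacle to be the bookkeeping in the middle paragraph: carefully verifying that the hypothesis genuinely forces a perfect matching of the odd-degree vertices (so that $R$ has the intended odd-degree set and its trail decomposition produces exactly $k$ pieces), and confirming that passing from edge-disjoint trails to sub-paths preserves both edge-disjointness and the prescribed endpoints.
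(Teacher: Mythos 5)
Your proposal is correct, but it takes a genuinely different route from the paper. The paper's proof is an exchange argument: it takes a family of paths with the required endpoints that minimizes the total number of edges, and shows that if two such paths shared an edge $uv$, one could split and recombine them at that edge into two walks with the same four endpoints but strictly fewer edges, contradicting minimality; hence any minimizing family is edge-disjoint, which yields the lemma. You instead construct the improved family in one shot: reduce the edge multiplicities $m_e$ modulo $2$ to get the parity subgraph $R$, observe (via the endpoint-slot count forcing the paths to pair up the odd-degree vertices perfectly) that $R$ has odd degree exactly at the $2k$ odd-degree vertices of $G$, decompose $R$ into $k$ edge-disjoint open trails with those endpoints by the classical trail-decomposition theorem, and extract a path from each trail; the strict inequality falls out because some $m_e \geq 2$ makes $|E(R)| < \sum_e m_e$. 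Each approach has its merits. Yours is cleaner for this lemma: it is non-iterative, it sidesteps the technical point that concatenations of path segments are a priori only walks (a detail the paper's proof glosses over when it calls the recombined objects $A$ and $B$ ``paths''), and it actually saves at least two edges rather than one. The paper's local exchange argument, on the other hand, is the template that gets reused and strengthened in the claw-free setting (Lemma~\ref{Lemma: Claw-free odd paths}), where vertex-disjointness is needed and a global parity-subgraph construction has no obvious analogue, since symmetric differences do not control vertex intersections.
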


\begin{proof}
    Let $G$ be a graph with $2k$ vertices of odd degree.
    Let $\{P'_1,\dots, P'_k\}$ 
    be any set of paths that are not pairwise edge-disjoint, 
    such that for every vertex of odd degree $v$, there is a path $P'_i$ where $v$ is an endpoint of $P'_i$.
    Trivially such a set exists, since each connected component of $G$ contains an even number of odd degree vertices by the handshaking lemma.

    Let
    $\{P_1,\dots, P_k\}$ be such  a set of paths
    that minimizes the total number of edges in the union of all paths $P_i$.
    Then $\{P_1,\dots, P_k\}$ has  $|E(\cup_{i=1}^k P_i)| \leq  |E(\cup_{i=1}^k P'_i)| $.
    We claim that the minimality of $P_1,\dots, P_k$ implies this set of paths is edge-disjoint.
    If true, then $|E(\cup_{i=1}^k P_i)| <  |E(\cup_{i=1}^k P'_i)| $ since  $P'_1,\dots, P'_k$ is not edge-disjoint.

    Suppose two paths, say $P_1$ and $P_2$ without loss of generality, have a common edge $uv$.
    Since there are $2k$ vertices of odd degree, $k$ paths, and every vertex of odd degree is an endpoint of at least one path,
    every vertex of odd degree is the endpoint of exactly one path.
    Furthermore, all endpoints of paths $P_i$ are vertices of odd degree.
    Thus, $P_1$ and $P_2$ have distinct endpoints.
    
    Let $P_1$ and $P_2$ be given by
\begin{align*}
    &P_1: x_1,\dots, x_i = u,x_{i+1} = v, \dots, x_t \\
    &P_2: y_1,\dots, y_j = u,y_{j+1} = v, \dots, y_\ell.
\end{align*}
    Then $A: x_1,\dots, x_{i} = u = y_j, \dots, y_1$ and $B: x_t,\dots, x_{i+1} = v = y_{j+1}, \dots, y_\ell$ are paths with the same endpoints as $P_1$ and $P_2$.
    Furthermore, $|E(A)|+|E(B)| < |E(P_1)|+|E(P_2)|$.
    Thus, $\{A,B,P_3,\dots, P_k\}$ is a set of paths with the required property which spans fewer edges than $P_1,\dots, P_k$.
    This contradicts the minimality of $\{P_1,\dots, P_k\}$,
    completing the proof.
\end{proof}

\begin{lemma}\label{Lemma: Claw-free odd paths}
    Let $G$ be a claw-free graph with $2k$ vertices of odd degree and let $P'_1,\dots, P'_k$ 
    be any set of paths that are not pairwise vertex-disjoint, 
    such that for every vertex of odd degree $v$, there is a path $P'_i$ where $v$ is an endpoint of $P'_i$.
    Then there exists a set of vertex-disjoint paths $P_1,\dots, P_k$
    such that for every vertex of odd degree $v$, 
    there is a path $P_i$ where $v$ is an endpoint of $P_i$,
    satisfying that $|E(\cup_{i=1}^k P_i)| <  |E(\cup_{i=1}^k P'_i)| $.
\end{lemma}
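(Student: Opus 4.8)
The plan is to mirror the exchange-argument structure of Lemma~\ref{Lemma: general odd paths}, but to upgrade "edge-disjoint" to "vertex-disjoint" by exploiting claw-freeness at any point where two paths share a vertex. As before, I would start by taking $\{P_1,\dots,P_k\}$ to be a set of paths with the required endpoint property (every odd-degree vertex is an endpoint of some $P_i$) that minimizes $|E(\cup_{i=1}^k P_i)|$. By the same counting as in the previous lemma, since there are $2k$ odd-degree vertices, $k$ paths, and each odd vertex is an endpoint of at least one path, each odd vertex is the endpoint of exactly one path and all path endpoints are odd-degree vertices; in particular distinct paths have distinct endpoints. I would then argue that minimality forces this set to be vertex-disjoint, which immediately gives the strict inequality against the non-vertex-disjoint family $P'_1,\dots,P'_k$.

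The heart of the argument is the reduction step. Suppose two paths, say $P_1$ and $P_2$, share a vertex $w$. First I would dispose of the easy sub-case where they share an \emph{edge}: the same rerouting as in Lemma~\ref{Lemma: general odd paths} (splitting at the shared edge and recombining into paths $A,B$ with the same four endpoints) strictly decreases the total number of edges, contradicting minimality. So I may assume $P_1$ and $P_2$ meet at $w$ as an internal (or endpoint) vertex but share no edge at $w$. The goal is to reroute so that the two paths no longer both pass through $w$, without increasing the edge count, and here is where claw-freeness enters: $w$ has (up to) four distinguished neighbours along the two paths, namely the neighbours of $w$ on each side of $P_1$ and of $P_2$. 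Because $G$ is claw-free, the neighbourhood of $w$ contains no independent set of size $3$, so among these neighbours two of them — lying on \emph{different} paths or on the two halves in a useful configuration — must be adjacent. That adjacency gives a shortcut edge that lets me splice the two paths together past $w$ and reroute so that $w$ is used at most once, producing a new valid family spanning strictly fewer edges (the newly created path can bypass $w$ using the shortcut while the other path drops $w$ entirely), again contradicting minimality.

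The main obstacle will be organizing the case analysis at the shared vertex $w$ cleanly. When $w$ is internal to both paths there are up to four path-neighbours of $w$, and claw-freeness only guarantees \emph{some} adjacent pair among any three of them; I must check that in every configuration the guaranteed adjacent pair can actually be used to reroute while (i) preserving all prescribed endpoints, (ii) keeping both objects genuine paths (no repeated vertices introduced), and (iii) strictly decreasing $|E(\cup_i P_i)|$. Subtle cases include when $w$ is an endpoint of one path, when the two path-neighbours that are forced adjacent lie on the \emph{same} path (which may only give a chord, not an immediate improvement, so I may need to apply claw-freeness to a carefully chosen triple of neighbours to guarantee a cross-path adjacency), and when using the shortcut edge risks reusing an edge already present. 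I expect to handle this by choosing, among all minimizers, one that also minimizes the number of shared vertices (a secondary minimization), so that each reroute can be shown to reduce either the edge total or this secondary quantity; this lets me conclude vertex-disjointness without having to verify a strict edge decrease in every single configuration.
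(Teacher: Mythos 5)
You follow the same route as the paper: take a family minimizing the total number of edges, invoke Lemma~\ref{Lemma: general odd paths} to reduce to the edge-disjoint case, and then use claw-freeness at a shared vertex $w$ to find an adjacency among path-neighbours of $w$ that permits a strictly improving reroute. However, the part you explicitly leave open --- organizing the case analysis at $w$ --- is the actual content of the lemma, and the hedge you propose (a secondary minimization of the number of shared vertices) is neither needed nor obviously sufficient; as written, your proposal stops exactly where the proof has to do its work, and your stated worry that a same-path adjacency ``may only give a chord, not an immediate improvement'' is in fact wrong.

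Here is the missing observation that collapses the case analysis. Since there are $2k$ odd-degree vertices, $k$ paths, and every odd-degree vertex is an endpoint of some path, every odd-degree vertex is the endpoint of exactly one path; in particular $w$ can be an endpoint of at most one of $P_1,P_2$, so without loss of generality $w$ is internal to $P_1$. You then never need four neighbours: take only the triple consisting of $a,b$ (the two neighbours of $w$ along $P_1$) and $c$ (either neighbour of $w$ along $P_2$). Edge-disjointness, already established, gives $c \notin \{a,b\}$, so claw-freeness yields an edge within $\{a,b,c\}$, and all three cases are immediate strict improvements. If $ab \in E(G)$, replace the subpath $awb$ of $P_1$ by the edge $ab$, deleting one edge --- this is precisely the ``chord'' case you feared, and it is the easiest one. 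If instead $ac \in E(G)$ (symmetrically $bc$), write $P_1: x_1,\dots,x_{i-1}=a,\ x_i=w,\ x_{i+1}=b,\dots,x_t$ and $P_2: y_1,\dots,y_{j-1}=c,\ y_j=w,\dots,y_\ell$, and re-pair the ends as $A: x_1,\dots,a,c,\dots,y_1$ and $B: x_t,\dots,w,\dots,y_\ell$; this trades the two edges $aw$ and $cw$ for the single edge $ac$, a strict decrease, while preserving the four prescribed endpoints. (One point both you and the paper gloss over: $A$ need not be a simple path if $P_1$ and $P_2$ share further vertices, but shortcutting repeated vertices only deletes additional edges, so minimality is contradicted regardless.) With this, no secondary minimization is required, and the strict inequality follows exactly as you say: a family that is not vertex-disjoint cannot be edge-minimal.
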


\begin{proof}

    Let $G$ be a claw-free graph with  $2k$ vertices of odd degree.
    Let $\{P'_1,\dots, P'_k\}$ 
    be any set of paths that are not pairwise vertex-disjoint, 
    such that for every vertex of odd degree $v$, there is a path $P'_i$ where $v$ is an endpoint of $P'_i$.
    By Lemma~\ref{Lemma: general odd paths} all sets of paths which link the set of vertices of odd degree like this
    that minimize the number of edges used must be edge-disjoint paths.
    We now show that in a claw-free graph, such a minimum set of $k$ paths must be vertex-disjoint. 
    
    Among all such collections of $k$ edge-disjoint paths, choose one, call it $\{P_1,\dots, P_k\}$, that minimizes the total number of edges. 
    Suppose, for contradiction, that two of these paths, say $P_1$ and $P_2$, share a vertex $w$.

    Obviously,
    every vertex of odd degree is the endpoint of exactly one path.
    Thus, $w$ is not an endpoint of $P_1$ and $P_2$.
    Notice that $w$ may be the endpoint of $P_1$ or (exclusive) $P_2$.
    Without loss of generality suppose $w$ is not the endpoint of $P_1$.
    Then $w$ is adjacent to some distinct vertices $ a, b \in V(P_1) $ and to some vertex $c \in V(P_2) $. 
    Since $P_1$ and $P_2$ are edge-disjoint, $\{a,b\}\cap \{c\} = \emptyset$.

    Since $G$ is claw-free, at least one edge must exist among the vertices $\{a, b, c\}$.
   If $ab \in E(G)$, then instead of passing through $w$, we can replace the subpath $ awb $ in $P_1$ with the edge $ab$, resulting in a shorter path.

    By symmetry, we can argue about the edges $ac$ and $bc$ interchangeably.
    Consider the edge $ac$.
    Let $P_1$ and $P_2$ be given by
\begin{align*}
    &P_1: x_1,\dots, x_{i-1} = a, x_i = w,x_{i+1} = b, \dots, x_t \\
    &P_2: y_1,\dots, y_{j-1} = c, y_j = w, \dots, y_\ell.
\end{align*}
    Then $A: x_1,\dots, x_{i-1} = a, y_{j-1} = c, \dots, y_1$ and $B: x_t,\dots, x_{i} = w = y_{j}, \dots, y_\ell$ are paths with the same endpoints as $P_1$ and $P_2$.
    However $|E(A)|+|E(B)|<|E(P_1)|+|E(P_2)|$.
    Thus, $\{A,B,P_3,\dots, P_k\}$ is a set of paths whose endpoints consist of the set of odd degree vertices, which contains strictly less edges than $P_1,\dots, P_k$.
    If $\{A,B,P_3,\dots, P_k\}$ is a set of edge-disjoint paths, then this contradicts the minimality of $\{P_1,\dots, P_k\}$.
    Otherwise, $\{A,B,P_3,\dots, P_k\}$ is a set of paths that is not edge-disjoint.
    In this case, the proof of Lemma~\ref{Lemma: general odd paths} implies there is a set of edge-disjoint paths whose endpoints consist of odd degree vertices, say $\{P''_1,\dots, P''_k\}$ which spans strictly less edges than $\{A,B,P_3,\dots, P_k\}$ edges.
    Again this contradicts the minimality of $P_1,\dots, P_k$.
    
Thus, in a claw-free graph $G$, any set of $k$ paths that link the $2k$ vertices of odd degree in $G$ which minimizes the number of edges used
must be vertex-disjoint.
Hence, our assumption $\{P'_1,\dots, P'_k\}$ is not vertex-disjoint implies this $\{P'_1,\dots, P'_k\}$ does not minimize the number of edges used.
This completes the proof.
\end{proof}

We are now prepared to prove Theorem~\ref{Thm: Claw-free}.

\begin{proof}[Proof of Theorem~\ref{Thm: Claw-free}]
Suppose $G$ is a claw-free graph with maximum degree $\Delta$.
By the handshaking lemma, $G$ has an even number of vertices with odd degree.
Suppose without loss of generality that $G$ 
has $2k$ vertices of odd degree. 

Consider a set of paths $\{P_1, \ldots, P_{k}\}$ which links the $2k$ vertices of odd degree in $G$ with as few edges as possible. 
By Lemma~\ref{Lemma: Claw-free odd paths} this set of paths is vertex-disjoint.
If there exists a minimum set of paths of this type which contains a vertex of maximum degree, suppose without loss of generality that 
$\{P_1, \ldots, P_{k}\}$ contains a vertex of maximum degree.

By our choice of the paths $\{P_1, \ldots, P_{k}\}$,
the graph $H = G - (\cup_{i=1}^k E(P_i))$ is even.
Now, consider two cases based on whether $k > \frac{\Delta}{2}$ or $k \leq \frac{\Delta}{2}$.

\medskip
\noindent \underline{Case 1:} $k > \frac{\Delta}{2}$. 
\medskip

The number of edges in $P_1\cup\dots\cup P_k$ is at most $n-k$.
Since $H$ is even, 
Theorem~\ref{Thm: Even Delta}
implies 
$\re(H) = \frac{\Delta(H)}{2} \leq \frac{\Delta}{2}$.
Therefore,
\[
\re(G) \leq n - k + \frac{\Delta}{2} < n - \frac{\Delta}{2} + \frac{\Delta}{2} = n,
\]
as desired.

\medskip
\noindent \underline{Case 2:} $k \leq \frac{\Delta}{2}$. 
\medskip

Let $u$ be a vertex of degree $\Delta$. 
If there exists a vertex of degree $\Delta$ in $\cup_{i=1}^k V(P_i)$, then let $u$ be this vertex.
Suppose for a contradiction one of the paths $P_i$ shares 3 vertices with the neighbourhood of $u$,
call these vertices $x,y,z$.

First, consider the case where $u$ is not in $\cup_{i=1}^k V(P_i)$.
By our choice of $P_1,\dots, P_k$ this implies that no minimum set of paths connecting the $2k$ vertices of odd degree
contains a vertex of degree $\Delta$.
Observe that one of the two following things must be true.
First, $xyz$ is a subpath of $P_i$. 
In this case we may replace $xyz$ in $P_i$ with the path $xuz$ to form $P'_i$.
But then $\{P_1,\dots, P'_i,\dots P_k\}$ is a minimum set of paths connecting the $2k$ vertices of odd degree which
contains a vertex of degree $\Delta$. The existence of such a set is a contradiction.
Second, $xyz$ is not a subpath of $P_i$, in this case we suppose without loss of generality $y$ is contained on the subpath of $P_i$ connecting $x$ and $z$.
Replace this subpath of $P_i$ with $xuz$ to form $P'_i$.
Then $\{P_1,\dots, P'_i,\dots P_k\}$ is a set of paths connecting the $2k$ vertices of odd degree which has fewer edges than $\{P_1,\dots, P_k\}$
a contradiction.
Thus, we suppose $u$ is in $\cup_{i=1}^k V(P_i)$.

If $u$ has even degree, then we can shorten (or preserve the length of) $P_i$ by replacing the subpath between the furthest such neighbours $x$ and $z$ in $P_i$ with the path $xuz$ to form a path $P'_i$. 
Since $u$ is a vertex of even degree, $u$ is not an endpoint of $P_i$.
Hence, the resulting set of paths has the same endpoints as $\{P_1,\dots, P_k\}$ and no more edges than $\{P_1,\dots, P_k\}$.
If $P_i$ is not the path containing $u$, then this new set of paths is not vertex disjoint.
Hence,
Lemma~\ref{Lemma: Claw-free odd paths} implies $\{P_1,\dots, P'_i,\dots, P_k\}$ is not minimal.
This contradicts the minimality of $\{P_1,\dots, P_k\}$, so we suppose $P_i$ is the path containing $u$.
In this case, 
let $x$ and $y$ be the neighbours of $u$ in $P_i$.
Then without loss of generality, there is subpath connecting $y$ and $z$ in $P_i$.
Replace this subpath with the edge $uz$
to define $P'_i$.
Then $P'_i$ has strictly less edges than $P_i$ contradicting the minimality of $\{P_1,\dots, P_k\}$.

If $u$ has odd degree and $P_i$ is a path not containing $u$, 
then we arrive at a contradiction by the same argument as when $u$ has even degree and $P_i$ is a path not containing $u$.
Otherwise $u$ has odd degree and $P_i$ is the path containing $u$.
Since $u$ has odd degree, $u$ is an endpoint of $P_i$.
In this case, let $x$ be the neighbour of $u$ on $P_i$, and let $z \neq a$ be any other neighbours of $u$ on $P_i$.
Replace the subpath connecting $x$ and $z$ in $P_i$ with the edge $uz$.
Then, $\{P_1,\dots, P'_i,\dots, P_k\}$ is a set of paths connecting the $2k$ vertices of odd degree with strictly fewer edges than $\{P_1,\dots,P_k\}$.
This contradicts the minimality of $\{P_1,\dots, P_k\}$.
Therefore, we conclude that $|V(P_i) \cap N(u)| \leq 2$ for all paths $P_i$.

Since each path  $P_i$ intersects $N(u)$ in at most two vertices, the number of vertices in all $k$ paths is at most $n - \Delta + 2k$. 
Therefore, the number of edges removed, when deleting $\cup_{i=1}^k E(P_i)$ to form $H$, is at most
$
n - \Delta + k.
$
As before, $H$ is even and Theorem~\ref{Thm: Claw-free} implies  $ \re(H) = \frac{\Delta(H)}{2} \leq \frac{\Delta}{2}$. Hence,
\[
\re(G) \leq n - \Delta + k + \frac{\Delta}{2} \leq n - \frac{\Delta}{2} + \frac{\Delta}{2} = n.
\]

It remains to show that equality cannot hold. 
For equality to hold in the above bound, we must have \( 2k = \Delta \).
Furthermore, we must have exactly $n-\Delta+k$ edges in $\cup_{i=1}^k E(P_i)$.
For there to be this many edges in $\cup_{i=1}^k E(P_i)$, there must be exactly $n - \Delta + 2k = n$ vertices in 
$\cup_{i=1}^k V(P_i)$.
Hence, every vertex in \( G \) must appear in some path \( P_i \).
In that case, the maximum degree of $H$ would be strictly less than $\Delta$, implying that $H$ can be decomposed using strictly fewer than $ \frac{\Delta}{2}$, 2-regular subgraphs. This contradicts the assumption of equality, and thus we conclude that
\[
\re(G) < n,
\]
which completes the proof.
\end{proof}

\section{Covers by Cycles and Edges}
\label{Sec: covers}

In this section we turn our attention to covers rather than decompositions.
Notice that even when talking about covers, any tree requires $n-1$ cycles and edges to be covered.
We begin by noting the following interesting, and helpful,
result by Fan \cite{fan2003covers}.

\begin{theorem}\label{fan}[Fan \cite{fan2003covers}]
Let $G$ be an even $n$ vertex graph. Then $G$ can be covered with $\lfloor \frac{n-1}{2}\rfloor$ cycles.
\end{theorem}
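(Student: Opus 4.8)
The plan is to argue by induction on $n$, after reducing to the connected case. If $G$ has nontrivial components $G_1,\dots,G_c$ on $n_1,\dots,n_c$ vertices, then covering each $G_i$ with $\lfloor (n_i-1)/2\rfloor$ cycles covers $G$ with $\sum_i \lfloor (n_i-1)/2\rfloor \le (\sum_i n_i - c)/2 \le (n-1)/2$ cycles; since the left-hand side is an integer it is at most $\lfloor (n-1)/2\rfloor$, and isolated vertices carry no edges and only help. So I may assume $G$ is connected and even, hence Eulerian, and in particular bridgeless, since in an even graph every edge cut has even size and so cannot have size one.

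The engine of the induction is the principle that each cycle should be charged to two vertices. Concretely, a vertex $v$ of degree $2$ can be \emph{suppressed} (delete $v$ and join its two neighbours): this preserves evenness, lowers $n$ by one, and leaves the cover size unchanged, since any cycle through the newly created edge reroutes through $v$; as $\lfloor (n-2)/2\rfloor \le \lfloor (n-1)/2\rfloor$ this move is ``free.'' Even better, whenever some cycle $C$ meets at least two vertices of degree $2$, the graph $G-E(C)$ is even and spans at most $n-2$ non-isolated vertices, so induction covers it with at most $\lfloor (n-3)/2\rfloor$ cycles; appending $C$ yields $\lfloor (n-3)/2\rfloor + 1 = \lfloor (n-1)/2\rfloor$ cycles, exactly the budget. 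Iterating such moves reduces the problem to graphs of minimum degree at least $4$.

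The main obstacle is precisely this minimum-degree-$\ge 4$ regime, where no single cycle isolates any vertex and the cheap reductions are unavailable. Here the naive covers are far too large: the fundamental cycles of a spanning tree already number $m-n+1$, which for dense graphs such as $K_{2k+1}$ vastly exceeds $\lfloor (n-1)/2\rfloor$. The decomposition of Theorem~\ref{Thm: Even Delta} into $\tfrac{\Delta}{2}$ many $2$-regular subgraphs is a tempting starting point, but its total cycle count can also exceed $\lfloor (n-1)/2\rfloor$ --- indeed, beating that bound by an honest \emph{decomposition} is the long-open Hajós conjecture, so the slack afforded by covering is essential and cannot be avoided. The approach I would take is to reduce $n$ by completely splitting off a vertex $v$ of degree at least $4$ (via the classical Eulerian splitting-off theorems), preserving evenness and connectivity, cover the resulting $(n-1)$-vertex graph by induction, and reroute each cycle back through $v$. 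The delicate point, which I expect to be the hardest, is that a single cycle of the smaller cover may traverse two of the split edges at $v$, so rerouting it through $v$ twice destroys simplicity; repairing this without inflating the cycle count is exactly where the freedom of \emph{edge reuse} must be spent, and controlling it --- together with the multigraphs produced by splitting and by suppression --- is the heart of the argument.

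Finally, I would record the tightness of the bound against the friendship graph $F_k$ on $n=2k+1$ vertices, whose only cycles are $k$ edge-disjoint triangles, and against the odd complete graph $K_{2k+1}$, which decomposes into $k$ Hamilton cycles; each forces exactly $k=\lfloor (n-1)/2\rfloor$ cycles, confirming that the constant cannot be improved.
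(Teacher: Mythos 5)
Your easy reductions are sound --- the passage to connected (hence Eulerian) graphs, the suppression of a degree-$2$ vertex, and the removal of a cycle through two degree-$2$ vertices all work as stated, and your tightness examples (the friendship graph and $K_{2k+1}$) are correct. But the proposal stops exactly where the theorem begins. In an even graph every degree is even, so after your reductions the whole content of the statement is the minimum-degree-at-least-$4$ case, and there you offer a plan rather than a proof: split off a vertex $v$ completely, cover the resulting $(n-1)$-vertex graph by induction, and reroute --- with the ``delicate point'' of repairing cycles that traverse two split edges explicitly deferred. That point is not a detail; it is the theorem. Note the arithmetic: if $n$ is even, then $\lfloor (n-1)/2\rfloor = (n-2)/2$ and the inductive budget for the $(n-1)$-vertex graph is $\lfloor (n-2)/2\rfloor = (n-2)/2$ as well, so the rerouting step has \emph{zero} slack; yet a cycle of the smaller cover using two split edges reroutes to a closed walk through $v$ twice, whose natural repair splits it into two cycles, increasing the count. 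With no spare cycles available (and only one spare when $n$ is odd), the plan fails without a genuinely new idea, and no such idea is supplied. The multigraphs produced by suppression and by splitting-off are a second unresolved issue: your induction hypothesis is stated for (simple) even graphs, and parallel edges would have to be covered by $2$-cycles, which are not cycles here. You yourself observe that the decomposition analogue is the open Haj\'os conjecture --- which is precisely the evidence that everything hard about this statement lives in the regime your sketch leaves open.

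For the comparison you were asked about: the paper contains no proof of this statement at all. It is imported as a black box from Fan \cite{fan2003covers}, and is then used (together with Theorem~\ref{Thm: n-3 even}) to prove Theorem~\ref{Thm: Cover-Cycles and Edges}. Fan's published proof is a substantial standalone argument that goes well beyond the reductions you describe. So, judged as a replacement for that citation, your proposal does not establish the result; it identifies the right obstacle but does not overcome it.
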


Using this result, we can prove Theorem~\ref{Thm: Cover-Cycles and Edges}.
Recall that this states that every $n$ vertex graph $G$ can be covered by $n-1$ cycles and edges,
with equality if and only if $G$ is a tree.

\begin{proof}[Proof of Theorem~\ref{Thm: Cover-Cycles and Edges}]
If $G$ does not have a cycle, then it is a forest with at least two connected components; so $\Cce(G) \leq n-2$. 
So we may assume that $G$ contains a cycle.
Suppose $n$ is the least integer such that there exists an $n$ vertex graph $G$ such that $G$ contains a cycle and $\Cce(G) > n-2$.
Observe that $G$ must have at least $5$ vertices, 
as one can easily verify the theorem holds for graphs with at most $4$ vertices.
Let $G$ be such a smallest $n$ vertex counterexample.
Trivially, $G$ must be connected.

First, we show that if $G$ has a cut vertex, then $G$ is not a smallest counterexample.
Let
$v$ be a cut vertex in $G$.
Then there exists non-empty subgraphs $G_1$ and $G_2$ of $G$,
where
$V(G_1) \cap V(G_2) = \{v\}$, $G = G_1\cup G_2$, and there is no edge $uw$ where $u\in V(G_1)\setminus \{v\}$ and $w\in V(G_2)\setminus \{v\}$.
Let $G_1,G_2$ be such graphs and let
$|V(G_i)|=n_i,$ for $i=1,2$. 
If one of $G_1$ and $G_2$, say $G_1$ without loss of generality, is not a tree,  then by induction hypothesis, we have, $$\Cce(G)=\Cce(G_1)+\Cce(G_2)\leq (n_1-2)+ (n_2-1)=n-2,$$
as desired. If both $G_1$ and $G_2$ are trees, then $G$ is a tree, which would contradict $G$ containing a cycle.
Hence, $G$ does not contain a cut vertex, implying $G$ is $2$-connected

Suppose first that $G$ is a graph that can be decomposed into an even subgraph $H$ and a graph $F$ with at most $n-3$ edges. 
Since $G$ is $2$-connected every pair of edges in $G$ is contained in a cycle. 
Thus, $E(F)$ can be covered with at most $\lceil \frac{n-3}{2} \rceil$ cycles and edges.
From here, Theorem \ref{fan} implies
\[
\Cce(G) \leq \Cce(H)+ \lceil \frac{n-3}{2} \rceil \leq \lfloor \frac{n-1}{2} \rfloor + \lceil \frac{n-3}{2} \rceil=n-2. 
\]

Otherwise, Theorem~\ref{Thm: n-3 even} implies $G$ is $K_4$ with trees glued to each vertex.
Notice that such a graph $G$ is $2$-connected if and only if $G$ is $K_4$.
It is easy to verify $K_4$ is not a counterexample.
This completes the proof.
\end{proof}

Observe that this bound is best possible.
Notice the bound is reached by any connected graph $G$ which is the union of $k$ vertex disjoint trees $T_1,\dots, T_k$, and a graph $H$,
such that $\Cce(H) = |V(H)| - 2$, while $|V(H) \cap V(T_i)| = 1$ for all $i \in \{1,\dots, k\}$.
Are  these the only graphs that reach the bound?

\section{Future Work}

Of course the main open problem for future work is Conjecture~\ref{Conj: main}.
Proving the full conjecture is likely difficult, so we make note of some interesting, likely easier to solve, open cases.

\begin{problem}
    Show that if $G$ has maximum degree $5$, or if $G$ is $6$-regular, or $8$-regular,
    then $G$ can be decomposed into at most $n-1$ cycles and edges.
\end{problem}

Observe that to solve the $6$-regular case it is sufficient to prove the following conjecture.
We note that if $6$-regular graphs decomposing into three $2$-factors is replaced with $8$-regular graphs decomposing into four $2$-factors,
in this conjecture, 
then the result follows by a theorem Abreu, Aldred, Funk, Jackson, Labbate, and Sheehan \cite{abreu2004graphs}
regarding the existence of non-isomorphic $2$-factors in graphs with minimum degree at least $8$. 

\begin{conjecture}
    Every $6$-regular graph is decomposable into three $2$-factors, where one of these $2$-factors has a component with at least $4$ vertices.
\end{conjecture}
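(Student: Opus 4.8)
The plan is to reduce the conjecture to a statement about a single $2$-factor and then attack that statement by a swapping argument. First observe that the conjecture is equivalent to the assertion that every $6$-regular graph $G$ has a $2$-factor containing a cycle of length at least $4$. Indeed, if $F$ is such a $2$-factor, then $G - E(F)$ is $4$-regular, so by Petersen's theorem it decomposes into two $2$-factors $F'$ and $F''$; the triple $\{F, F', F''\}$ is then a decomposition of $G$ into three $2$-factors, one of which (namely $F$) has a component with at least $4$ vertices. Thus it suffices to rule out the possibility that every $2$-factor of $G$ is a vertex-disjoint union of triangles. I would assume, for a contradiction, that this is the case. Applying Petersen's theorem twice then shows that $G$ is the edge-disjoint union of three triangle factors $F_1, F_2, F_3$ (each a spanning union of vertex-disjoint triangles), and in particular $3 \mid n$.

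The next step is to work inside the $4$-regular graph $G_{12} := F_1 \cup F_2$ and try to re-decompose it so as to create a cycle of length at least $4$; any such cycle lives in a $2$-factor of $G$, contradicting our assumption. The basic tool is the alternating-cycle swap: colour the edges of $F_1$ red and those of $F_2$ blue, and note that for any red-blue alternating cycle $C$ in $G_{12}$, exchanging the two colours along $C$ yields a new decomposition of $G_{12}$ into two $2$-factors. A useful rigidity fact, forced purely by edge-disjointness, is that the three vertices of any $F_1$-triangle lie in three \emph{distinct} $F_2$-triangles: if two of them shared an $F_2$-triangle, that triangle and the $F_1$-triangle would repeat an edge. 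The aim is to choose an alternating cycle whose swap fuses two triangles into a longer cycle.

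The main obstacle is that a swap along an alternating cycle can merely permute triangles among themselves, carrying one triangle factor to another and producing no long cycle. So the heart of the argument is a structural analysis: I would classify the local configurations around an $F_1$-triangle and its three incident $F_2$-triangles for which no single short alternating swap creates a cycle of length at least $4$, and show that such \emph{triangle-rigid} blocks cannot assemble into a $6$-regular graph. I expect the extremal obstructions to be small, highly symmetric triangle-tiled pieces; a natural first checkpoint is to verify directly that the smallest candidates (for instance on $6$ or $9$ vertices) already fail to be rigid, and indeed on $6$ vertices the only $4$-regular graph is the octahedron, which is not even a union of two triangle factors. The decisive difficulty, and the reason this remains a conjecture, is ruling out rigidity globally: showing that whenever $G_{12}$ resists all local swaps the third factor $F_3$ can be used to complete a global $2$-factor with a long cycle, or that no $6$-regular graph can be triangle-rigid at all. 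Establishing this non-existence uniformly over all $6$-regular graphs is where I would expect the argument to require either a delicate case analysis or a genuinely new structural idea.
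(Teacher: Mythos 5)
The statement you are attempting is posed in the paper as an open conjecture in the Future Work section; the paper offers no proof of it, and your proposal does not close it either. Your opening reduction is correct and worthwhile: the conjecture is indeed equivalent to the assertion that every $6$-regular graph has a $2$-factor containing a cycle of length at least $4$ (equivalently, that not every $2$-factor is a triangle factor), since Petersen's theorem decomposes the complementary $4$-regular graph $G - E(F)$ into two further $2$-factors. This is essentially the same observation the paper itself makes when it remarks that the analogous $8$-regular statement follows from the theorem of Abreu, Aldred, Funk, Jackson, Labbate, and Sheehan on non-isomorphic $2$-factors in graphs of minimum degree at least $8$: what is needed is a fact guaranteeing some $2$-factor that is not a union of triangles, and no such fact is currently known in the degree-$6$ case.

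The genuine gap is the entire second half of your argument. You assume for contradiction that $G$ is an edge-disjoint union of three triangle factors $F_1, F_2, F_3$ and that every $2$-factor of $G$ is a triangle factor, and you propose to derive a contradiction by alternating-cycle swaps inside $F_1 \cup F_2$. But the decisive step --- classifying the ``triangle-rigid'' local configurations and proving that they cannot assemble into a $6$-regular graph --- is never carried out; you concede yourself that this is where a delicate case analysis or a genuinely new structural idea would be required. That concession is exactly the open problem restated, not a proof of it. Note also that restricting attention to swaps within $F_1 \cup F_2$ is a real limitation: a priori, a $2$-factor with a long cycle might only be reachable using edges from all three factors simultaneously, so even a complete rigidity analysis of unions of two triangle factors would not obviously finish the argument. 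As it stands, your proposal is a correct reformulation together with a plan of attack, with the central claim unproven.
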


An interesting weakened version of Conjecture~\ref{Conj: main} it to consider the same problem for decompositions into $2$-regular graphs and edges.

\begin{conjecture}
    For all $n$ vertex graphs $G$, $\re(G) = O(n)$.
\end{conjecture}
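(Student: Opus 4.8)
The plan is to prove the explicit bound $\re(G) \le \frac{3(n-1)}{2}$, which immediately gives $\re(G) = O(n)$. The conceptual point is that, unlike in Conjecture~\ref{Conj: main} where each cyclic part must be a \emph{single} cycle, here a part may be an arbitrary $2$-regular subgraph, so an entire even subgraph can be absorbed into only $O(n)$ parts via Theorem~\ref{Thm: Even Delta}. This is precisely what makes the $2$-regular relaxation tractable, and it sidesteps the hard packing phenomena responsible for the iterated-logarithm factor in the best known bound for $\ce$.

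First I would dispose of the trivial case: if $G$ is a forest then $\re(G) = |E(G)| \le n-1$, so I may assume $G$ contains a cycle. Next I would take an edge-maximum even subgraph $H$ of $G$ and set $F = G - E(H)$. Repeating the argument from the proof of Theorem~\ref{Thm: Eulerian decomp}, $F$ must be a forest: if $F$ contained a cycle $C$, then $C$ is even and edge-disjoint from $H$, so $H \cup C$ would be a strictly larger even subgraph, contradicting the maximality of $H$. Hence $|E(F)| \le n-1$.

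With the edge-disjoint decomposition $G = H \cup F$ in hand, the bound follows at once. Since $H$ is even with maximum degree $\Delta(H) \le n-1$, Theorem~\ref{Thm: Even Delta} decomposes $H$ into $\re(H) = \frac{\Delta(H)}{2} \le \frac{n-1}{2}$ many $2$-regular subgraphs, and taking each edge of $F$ as its own part contributes at most $n-1$ single edges. Concatenating these two families yields a decomposition of $E(G)$ into $2$-regular subgraphs and edges, so
\[
\re(G) \le \re(H) + |E(F)| \le \frac{n-1}{2} + (n-1) = \frac{3(n-1)}{2} = O(n).
\]

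I do not expect a genuine obstacle in establishing the $O(n)$ statement as worded; it drops out of Theorems~\ref{Thm: Even Delta} and~\ref{Thm: Eulerian decomp}. The real difficulty lies in sharpening the constant: paths show $\re(G) = n-1$ is sometimes forced, and Theorem~\ref{Thm: Claw-free} already attains the tight bound $\re(G) \le n-1$ for claw-free graphs, so the natural hard question is whether $\re(G) \le n-1$ holds for \emph{all} graphs. Pushing the above argument to that bound would require controlling $|E(F)|$ and $\Delta(H)$ simultaneously — for instance, by not peeling off a full spanning forest at once but instead interleaving the even and forest parts more carefully, in the spirit of the path-shortening arguments of Lemmas~\ref{Lemma: general odd paths} and~\ref{Lemma: Claw-free odd paths} — and that is where I expect the work to concentrate.
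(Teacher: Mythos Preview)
Your argument is correct. Taking an edge-maximal even subgraph $H$ and noting that $F = G - E(H)$ is a forest gives $|E(F)| \le n-1$; then Theorem~\ref{Thm: Even Delta} decomposes $H$ into $\Delta(H)/2 \le (n-1)/2$ many $2$-regular subgraphs, and adding the edges of $F$ yields $\re(G) \le \tfrac{3(n-1)}{2}$. Every step is sound.

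The paper, however, does \emph{not} prove this statement: it is posed as an open conjecture in the Future Work section, with no argument given. Your observation that Theorems~\ref{Thm: Eulerian decomp} and~\ref{Thm: Even Delta} together already yield the $O(n)$ bound appears to be something the authors overlooked, or else they intended a sharper statement than what is written. Your own reading --- that the genuine open problem is whether $\re(G) \le n-1$ holds for all graphs --- is consistent with the paper's very next conjecture, which asks exactly this for $(2k+1)$-regular graphs. So: the $O(n)$ conjecture as literally stated is resolved by your two-line combination of the paper's own theorems; the substantive question is the constant.
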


A nice special case is to solve this conjecture for regular graphs.
Notice that Petersen's $2$-factor theorem implies it is sufficient to prove the result for $(2k+1)$-regular graphs.
We note that the result is easy to show for $(2k+1)$-regular graphs with a perfect matching.

\begin{conjecture}
    For all $n$ vertex $(2k+1)$-regular graphs $G$, $\re(G) \leq n-1$.
\end{conjecture}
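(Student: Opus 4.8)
The plan is to reduce the conjecture to removing a small \emph{spanning odd subgraph} and then invoking Theorem~\ref{Thm: Even Delta}. Call a spanning subgraph $S$ of $G$ \emph{odd} if $\deg_S(v)$ is odd for every $v \in V(G)$. Since $G$ is $(2k+1)$-regular, removing the edges of such an $S$ lowers every degree by an odd amount, so $G - E(S)$ is even with $\deg_{G-E(S)}(v) = (2k+1) - \deg_S(v) \leq 2k$; hence $\Delta(G-E(S)) \leq 2k$ and Theorem~\ref{Thm: Even Delta} decomposes $G - E(S)$ into at most $k$ $2$-regular graphs. Treating each edge of $S$ as its own piece gives $\re(G) \leq |E(S)| + k$. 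As $G$ is simple and $(2k+1)$-regular we have $n \geq 2k+2$, so $k \leq \tfrac{n}{2} - 1$, and it suffices to find an odd spanning subgraph with $|E(S)| \leq n-1-k$. Because $G$ is $(2k+1)$-regular, all $n$ vertices have odd degree, so by Lemma~\ref{Lemma: general odd paths} a minimum such $S$ arises as the edge-disjoint union of $\tfrac{n}{2}$ paths linking the odd-degree vertices; equivalently, $S$ is a minimum $T$-join with $T = V(G)$, and every such $S$ has at least $\tfrac{n}{2}$ edges. Thus the available slack is exactly $\tfrac{n}{2} - 1 - k$ edges beyond a perfect matching.

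First I would dispose of the case where $G$ has a perfect matching $M$. Here $S = M$ is odd with $|E(S)| = \tfrac{n}{2} \leq n - 1 - k$, so the reduction immediately yields $\re(G) \leq \tfrac{n}{2} + k \leq n-1$. This is precisely the easy case noted before the conjecture, and it already settles a large class of graphs: by Petersen's theorem every bridgeless cubic graph has a perfect matching, and more generally every $(2k+1)$-regular, $2k$-edge-connected graph of even order has one. Consequently the conjecture holds whenever $G$ is sufficiently edge-connected, and the entire remaining difficulty is concentrated on graphs that fail to have a perfect matching.

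For the general case the goal becomes to bound a minimum odd spanning subgraph by $n-1-k$. The natural route is to start from a maximum matching, pair up the unmatched vertices, and route the matching deficiency through short linking paths as in Lemma~\ref{Lemma: general odd paths}, using the Gallai--Edmonds decomposition to locate the unmatched vertices and the factor-critical obstructions. Each unit of deficiency forces additional edges into $S$, so one must show the deficiency is small relative to the slack $\tfrac{n}{2} - 1 - k$. A useful starting point is the parity count that each odd-order component $C$ of $G - U$ sends $(2k+1)|C|$ minus twice its internal edges, an odd and hence positive number of edges, to $U$, which bounds the number of odd components in terms of $(2k+1)|U|$.

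The hard part will be making this balance quantitative and uniform in $k$. When $k$ is large the graph is dense and the slack $\tfrac{n}{2}-1-k$ is small, but density forces high edge-connectivity and hence a perfect matching, so $S$ costs only $\tfrac{n}{2}$; when $k$ is small the graph may have many bridges and no perfect matching, yet then the slack is large. Turning this qualitative trade-off into a proof that the minimum odd spanning subgraph never exceeds $n-1-k$ across the whole range of $k$ is the crux, and it is plausibly where a recursive argument that cuts $G$ along its bridges, applies the perfect-matching case to the resulting $2$-edge-connected blocks, and reassembles the pieces while tracking the edge budget will be required.
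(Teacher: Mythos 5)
The statement you are attempting is stated in the paper as a \emph{conjecture}: the authors offer no proof of it, and the only thing they claim is the remark immediately preceding it, that ``the result is easy to show for $(2k+1)$-regular graphs with a perfect matching.'' Your proposal, in substance, establishes exactly that easy case and nothing more. The reduction itself is sound: a spanning subgraph $S$ in which every vertex has odd degree (a $T$-join with $T = V(G)$) leaves $G - E(S)$ even with maximum degree at most $2k$, Theorem~\ref{Thm: Even Delta} then contributes at most $k$ two-regular graphs, counting each edge of $S$ separately gives $\re(G) \leq |E(S)| + k$, and since a simple $(2k+1)$-regular graph has $n \geq 2k+2$, the case $|E(S)| = \tfrac{n}{2}$ (i.e., $G$ has a perfect matching) yields $\re(G) \leq \tfrac{n}{2} + k \leq n-1$. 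That part is correct and complete.

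The genuine gap is the entire remaining case: your reduction requires that every $(2k+1)$-regular graph without a perfect matching still contains an odd spanning subgraph with at most $n-1-k$ edges, and you never prove this. The paragraphs invoking Gallai--Edmonds, deficiency counts, and recursion along bridges are a plan, not an argument: no bound on the minimum $V$-join in terms of the matching deficiency is derived; the claim that ``density forces high edge-connectivity'' is a heuristic, since $(2k+1)$-regular graphs with bridges exist for every $k$ once $n$ is large enough, which is exactly the regime where perfect matchings can fail; and the proposed recursion is never set up, so there is no induction hypothesis, no base case, and no accounting that the budget $n-1-k$ survives reassembly (the blocks of a bridge decomposition of a regular graph are not themselves regular, so the perfect-matching case cannot simply be applied to them). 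Note also that the generic bound via a spanning tree only gives a $V$-join of size up to $n-1$, which misses your target by $k$, so some genuinely new structural property of regular graphs is needed; identifying and proving it is precisely the open content of the conjecture. As written, your proposal re-derives the authors' own remark and leaves the conjecture open.
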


Next, we formally state a problem mentioned at the end of Section~\ref{Sec: covers}.

\begin{problem}
    Characterize the $n$ vertex graphs $G$ such that $G$ contains a cycle and $\Cce(G) = n-2$.
\end{problem}

Another interesting avenue
of questions comes from the methods used in the paper.
In particular the matter of the existence of linkages between vertices of odd degree,
see Lemma~\ref{Lemma: general odd paths} and Lemmas~\ref{Lemma: Claw-free odd paths}.
What graph properties, other than being claw-free, can ensure there is a vertex disjoint linkage between the set of odd degree vertices in a graph $G$?
A natural question along these lines is to consider for what graphs $H$ must all $H$-free graphs have such a linkage?

\begin{problem}
    Characterize the graphs $H$ such that if $G$ is an $H$-free graph with $2k$ vertices of odd degree, then there exists vertex disjoint paths $P_1,\dots, P_{k}$, where for all odd degree vertices $v$, $v$ is the endpoint of some path $P_i$.
\end{problem}

\section*{Acknowledgement}

The research visit of S. Akbari at Simon Fraser University was supported in part by the ERC Synergy grant 
(European Union, ERC, KARST, project number 101071836).
Clow is supported by the Natural Sciences and Engineering Research Council of Canada (NSERC) through PGS D-601066-2025.
We would also like to thank Bertille Granet and Matija Buci{\'c} for independently making us aware of Erd{\H{o}}s and Gallai's conjecture, as well as the literature regarding it,
following the released of the first version of this paper.

\bibliographystyle{abbrv}
\bibliography{bib}

\end{document}